\newtheorem{definition}{Definition}[section]
\newtheorem{theorem}[definition]{Theorem}
\newtheorem{corol}[definition]{Corollary}
\newtheorem{lemma}[definition]{Lemma}
\newcommand{\F}{\mathbb{F}}
\newcommand{\fq}{\mathbb{F}_q}
\newcommand{\fpn}{\mathbb{F}_{p^n}}
\newcommand{\rmv}[1]{}
\begin{document}


\title{Cyclotomy and permutation polynomials of large indices }


\author{Qiang Wang}
\thanks{Research of the authors was partially supported
by NSERC of Canada}

\address{School of Mathematics and Statistics\\
Carleton University\\
Ottawa, ON K1S 5B6\\
Canada}
\email{wang@math.carleton.ca}

\keywords{\noindent polynomials,  permutation polynomials, cyclotomic mappings, finite fields}

\subjclass[2000]{11T06}


\maketitle

\date{}


\font\Bbb msbm10 at 12pt%




\begin{abstract}
We use cyclotomy to design new classes of permutation polynomials over finite fields. This allows us to generate many classes of permutation polynomials in an
algorithmic way. Many of them are permutation polynomials of large indices.
\end{abstract}


\section{Introduction}

Let $p$ be prime and $q=p^m$. Let $\fq$ be a finite field of $q$ elements and 
$\fq^* = \fq \setminus \{0\}$. A polynomial is a permutation
polynomial (PP) of a finite field $\mathbb{F}_q$ if it induces a
bijective map from ${\mathbb F}_q$ to itself. The study of
permutation polynomials of a finite field goes back to 19-th
century when Hermite and later Dickson pioneered this area of
research.  In recent years, interests in permutation polynomials
have significantly increased because of their
applications in coding theory and cryptography such as $S$-boxes.
 In some of these applications, the study of permutation polynomials  over finite fields has also been extended to the
study of permutation polynomials over finite rings and other
algebraic structures. For more background material on permutation
polynomials we refer to Chap. 7 of \cite{LN:97}. For a detailed
survey of open questions and recent results see \cite{LM:88},
\cite{LM:93}, \cite{Mullen:93}, and \cite{MullenWang:12}.

In \cite{AGW:11}, the authors provide a general theory which, in essence, reduces a problem of determining whether a given polynomial over a finite field $\fq$ is a permutation polynomial to a problem of determining whether another polynomial permutes a smaller set. One of very useful smaller sets is the set of cyclotomic cosets. Earlier,
Niederreiter and Winterhof \cite{NW:05} and  Wang\cite{Wang} have studied so-called cyclotomic permutations. Namely, let $C_0$ be a subgroup of $\mathbb{F}_q^*$ with
index $\ell \mid q-1$ and the factor group $\mathbb{F}_q^*/C_0$ consists of the {\it cyclotomic cosets}
$$ C_i := \gamma^i C_0, \  \  \  i = 0, 1, \cdots, \ell-1,$$
where $\gamma$ is a fixed primitive element of $\fq$. 
For any $A_0, A_1, \cdots, A_{\ell-1} \in \mathbb{F}_q$ and positive integer $r$,   the so-called {\it $r$-th order cyclotomic mapping $f^{r}_{A_0, A_1, \cdots,
A_{\ell-1}}$ of index $\ell$ }  from $\mathbb{F}_q$ to itself is defined by
$$
f^{r}_{A_0, A_1, \cdots,
A_{\ell-1}} (x) = 
\left\{
\begin{array}{ll} 
0, &   if ~ x=0; \\
A_0 x^{r}, &  if~ x \in C_0; \\
\vdots &  \vdots \\ 
A_i x^{r},   &   if ~x \in C_i; \\
\vdots & \vdots \\
A_{\ell-1} x^{r},  &  if~ x \in C_{\ell-1}.
\end{array}
\right.
$$

It is shown that  $r$-th order cyclotomic mappings produce the polynomials of the form  $x^r f(x^s)$ where $s =\frac{q-1}{\ell}$. Furthermore, PPs of the form $x^r f(x^s)$ have been intensively studied in \cite{AAW:08, AGW:09, AW:05, AW:06, AW:07, Chapuy:07, WL:91, Wang, Zieve-2, Zieve:09, Zieve}.

It is also well known that every polynomial $P(x)$ over $\mathbb{F}_q$ such that $P(0) =b$
has the form $ax^rf(x^s)+b$ with some positive integers
 $r, s$ such that $s\mid q-1$. Let $q-1 = \ell s$. 
 More precisely, we
observe that any polynomial $P(x)\in\mathbb{F}_q[x]$ can be
written as $a(x^rf(x^{(q-1)/\ell}))+b$, for some $r \geq 1$ and
$\ell\mid (q-1)$. To see this, without loss of generality, we can
write
$$P(x)=a(x^n+a_{n-i_1} x^{n-i_1}+\cdots+a_{n-i_k} x^{n-i_k})+b,$$
where $a,~a_{n-i_j}\neq 0$, $j=1, \cdots, k$.  Here we suppose
that $j\geq 1$ and $n-i_k=r$.
Then $P(x)=a\left(x^r f(x^{(q-1)/\ell}) \right)+b,$ where $f(x)=
x^{e_0}+a_{n-i_1} x^{e_1}+\cdots+ a_{n-i_{k-1}}x^{e_{k-1}} + a_{r}
$,
 $$\ell=\frac{q-1}{\gcd(n-r,n-r-i_1,\cdots, n-r-i_{k-1}, q-1)},$$
and $\gcd(e_0, e_1, \cdots, e_{k-1}, \ell)=1 .$ 
The constant $\ell$ is called the {\it index} of polynomial $P(x)$ (see \cite{AGW:09}).   The index of a polynomials is closely related to the concept of the least index of cyclotomic permutations. 
Many classes of PPs that are constructed recently have  small indices $\ell$, see for example, \cite{AAW:08, AW:05, AW:06, AW:07, Chapuy:07,  Zieve-2, Zieve:09}.

In this paper, we extend the definition of cyclotomic mappings and study the permutation polynomials corresponding to these cyclotomic mappings.  These polynomials have either  the presentation given in terms of  cyclotomic mappings of index $\ell$, 
$$
f^{r_0(x), r_1(x), \ldots, r_{\ell-1}(x)}_{A_0, A_1, \cdots,
A_{\ell-1}} (x) = 
\left\{
\begin{array}{ll} 
0, &   if ~ x=0; \\
A_0 r_0(x), &  if~ x \in C_0; \\
\vdots &  \vdots \\ 
A_i r_i(x),   &   if ~x \in C_i; \\
\vdots & \vdots \\
A_{\ell-1} r_{\ell -1}(x),  &  if~ x \in C_{\ell-1},
\end{array}
\right.
$$ 
or the polynomial presentation
\begin{equation*}\label{correspondence}
P(x) = \sum_{i=0}^{\ell-1} \frac{A_i}{\ell \zeta^{i(\ell-1)}} r_i(x) \left(x^{(\ell-1)s} + \zeta^i x^{(\ell-2)s} +\cdots + \zeta^{i(\ell-2)} x^s + \zeta^{i(\ell-1)} \right),
\end{equation*}
where $r_0(x), r_1(x), \ldots, r_{\ell-1}(x) \in \fq[x]$ and $\zeta =\gamma^s$ be a fixed  primitive $\ell$-th root of unity throughout this paper. 
Essentially, these polynomials are of the form $\sum_{i=0}^n x^{r_i} f_i(x^s)$. And indices of  polynomials of this form are normally large, which are different from $\ell$ in general.   After we study several cases when  $r_i(x)$'s are general and $\ell$ is small,  we study in detail the situation when $r_i(x)$'s are monomials $x^{r_i}$ for some positive integers $r_i$'s.  We give some general criteria of determining these polynomials are permutation polynomials of finite fields (Theorems~\ref{main},~\ref{construction}, ~\ref{main2}, ~\ref{specialMain}). One of them can be written as follows:

\begin{theorem}\label{main2}
Let $q-1 = \ell s$ and  $A_0, \ldots, A_{\ell-1} \in \fq$.  Then 
\[
P(x)  = 
\left\{
\begin{array}{ll} 
0, &   if ~ x=0; \\
A_0 x^{r_0}, &  if~ x \in C_0; \\
A_1 x^{r_1},  &  if~ x \in C_1; \\
\vdots & \vdots \\
A_{\ell-1} x^{r_{\ell-1}}, & if ~ x \in C_{\ell},
\end{array}
\right.
\]
is a PP of $\fq$ if and only if $(r_i, s) =1$ for any $i=0, 1, \ldots, \ell-1$ and  $\mu_{\ell} = \{ A_i^s \zeta^{r_i i}  \mid i=0, \ldots, \ell-1\}$, where $\mu_{\ell}$ is the set of all $\ell$-th roots of unity. 
\end{theorem}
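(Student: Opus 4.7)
The plan is to analyze $P$ on each cyclotomic coset separately and then determine when these images assemble to cover $\fq^*$ exactly once. Since $P(0) = 0$, it suffices to verify that $P$ restricts to a bijection of $\fq^* = \bigsqcup_{i=0}^{\ell-1} C_i$. The central observation I would use is that the $s$-th power map $x \mapsto x^s$ is a surjective group homomorphism $\fq^* \to \mu_\ell$ with kernel $C_0$, inducing an isomorphism $\fq^*/C_0 \xrightarrow{\sim} \mu_\ell$ under which $C_j \mapsto \zeta^j$ (since $(\gamma^j)^s = \zeta^j$). Thus each coset of $C_0$ is uniquely labeled by the $\ell$-th root of unity obtained by raising any of its elements to the $s$-th power.

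First I would study $P$ on a single coset $C_i$. Parametrizing $x \in C_i$ as $x = \gamma^i \gamma^{\ell k}$ with $k = 0,1,\ldots,s-1$, one computes $P(x) = A_i \gamma^{i r_i} (\gamma^\ell)^{k r_i}$. As $k$ varies, the factor $(\gamma^\ell)^{k r_i}$ runs through the subgroup of $C_0$ of order $s/\gcd(r_i, s)$. Consequently $P|_{C_i}$ is injective iff $\gcd(r_i, s) = 1$, and in that case the image $P(C_i) = A_i \gamma^{i r_i} C_0$ is precisely one coset of $C_0$ in $\fq^*$. Under the labeling above, this coset corresponds to $(A_i \gamma^{i r_i})^s = A_i^s \zeta^{r_i i} \in \mu_\ell$.

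Combining these pieces, $P$ is a PP of $\fq$ iff (a) $P|_{C_i}$ is injective for every $i$, which forces $\gcd(r_i, s) = 1$; and (b) the image cosets $P(C_0), \ldots, P(C_{\ell-1})$ are pairwise distinct, which via the isomorphism $\fq^*/C_0 \cong \mu_\ell$ is equivalent to the $\ell$ values $A_i^s \zeta^{r_i i}$ being distinct elements of $\mu_\ell$, i.e., $\{A_i^s \zeta^{r_i i} : 0 \leq i \leq \ell-1\} = \mu_\ell$. For the "if" direction, these two conditions produce $\ell$ disjoint coset images of total cardinality $\ell s = q-1$, covering $\fq^*$ bijectively. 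For the "only if" direction, failure of either condition produces either a collision inside some $C_i$ (if some $\gcd(r_i,s) > 1$, so $|P(C_i)| < s$) or an overlap between different $P(C_i)$'s. The main conceptual step, and really the only place requiring care, is the identification of cosets with $\mu_\ell$ via the $s$-th power map; once that identification is fixed the rest is clean bookkeeping on cosets rather than genuine combinatorial work.
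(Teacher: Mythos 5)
Your proof is correct and takes essentially the same route as the paper: the paper proves this as Theorem~\ref{main} by decomposing $\fq^*$ into the cosets $C_i$, showing injectivity on each coset is equivalent to $(r_i,s)=1$, identifying $P(C_i)$ with the coset $A_iC_{ir_i}$, and translating distinctness of these cosets into distinctness of the $\ell$-th roots of unity $A_i^s\zeta^{ir_i}$ via the representative $A_i\gamma^{ir_i}$ --- exactly your identification of $\fq^*/C_0$ with $\mu_\ell$ by the $s$-th power map. The only cosmetic remark is that your phrase ``the image $P(C_i)=A_i\gamma^{ir_i}C_0$ is a coset'' tacitly assumes $A_i\neq 0$; this is harmless since $A_i=0$ is excluded in both directions (a PP forces $A_i\neq 0$, and $0\notin\mu_\ell$).
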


We note that each $A_i^s \zeta^{r_i i}$ is an $\ell$-th root of unity as long as 
$A_i$ is not zero. Hence what we really need is to check all   $A_i^s \zeta^{r_i i}$ ($0\leq i \leq \ell-1$) are distinct in the above theorem. 
Using these criteria in different forms, we demonstrate our method by constructing many new classes of PPs (Theorems~\ref{ZhaHuThm8Gen}, ~\ref{PPoverfthreepower1}, ~\ref{PPoverfthreepower2}, ~\ref{PPoverfthreepower3}, ~\ref{PPoverfthreepower4}, \ref{specialR}, ~\ref{allOneBranchesCongruenceMinusOne}, ~\ref{binomialBranches},  Corollaries~\ref{rogersBranches}, ~\ref{exampleOfCongruenceOne},  ~\ref{exampleOfAllOneBranches}). 
Here we only list very few particular examples of these results over fields with small characteristic.

 \begin{theorem}
The polynomial $P(x) = x^{\frac{2(2^n-1)}{3}+2^i} + x^{\frac{2(2^n-1)}{3}+2^j} + x^{\frac{2^n-1}{3}+2^i} + x^{\frac{2^n-1}{3}+2^j} + x^{2^i}$ is a PP of $\F_{2^n}$ for any even positive integer $n$ and non-negative integers $i, j$. 
\end{theorem}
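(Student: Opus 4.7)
The plan is to recognize $P$ as a cyclotomic mapping of index $\ell=3$ and then invoke Theorem~\ref{main2}. Set $s = \frac{2^n-1}{3}$, which is an odd integer because $n$ even forces $3 \mid 2^n-1$, and let $\zeta$ denote a primitive cube root of unity in $\F_{2^n}$, so that $x^s = \zeta^k$ for $x \in C_k$ ($k = 0,1,2$).

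The first step is the factorization
\[
P(x) \;=\; (x^{2^i} + x^{2^j})(x^{2s}+x^s) \;+\; x^{2^i},
\]
which one reads off by grouping the five monomials of $P$ according to their $x^s$-component. The second step evaluates the cyclotomic factor $(x^s)^2 + x^s$ on each coset: on $C_0$ it equals $1+1=0$; on $C_1$ and on $C_2$ it equals $\zeta^2+\zeta$, which is $1$ by the relation $1+\zeta+\zeta^2 = 0$ in characteristic $2$. Consequently
\[
P(x) = x^{2^i}\ \text{for}\ x \in C_0, \qquad P(x) = x^{2^j}\ \text{for}\ x \in C_1 \cup C_2,
\]
which exhibits $P$ as the cyclotomic mapping of index $3$ with data $A_0 = A_1 = A_2 = 1$, $r_0 = 2^i$, and $r_1 = r_2 = 2^j$.

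The third step is to verify the two hypotheses of Theorem~\ref{main2}. Coprimality $(r_k, s) = 1$ is automatic since $s$ is odd while each $r_k$ is a power of $2$. For the root-of-unity condition, the relevant set $\{A_k^s \zeta^{r_k k} : k = 0,1,2\}$ becomes
\[
\{\,1,\ \zeta^{2^j},\ \zeta^{2 \cdot 2^j}\,\},
\]
and because $\gcd(2^j, 3) = 1$ the element $\zeta^{2^j}$ is itself a primitive cube root of unity, so these three values run through $\mu_3 = \{1, \zeta, \zeta^2\}$ regardless of the parity of $j$. Theorem~\ref{main2} then yields that $P$ permutes $\F_{2^n}$.

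The only genuinely nontrivial step is spotting the factorization that collapses the five-term polynomial into a cyclotomic mapping of index $3$; everything afterward is mechanical. As a sanity check, in the degenerate case $i = j$ the four middle terms pair off in characteristic $2$ and $P$ reduces to $x^{2^i}$, which is trivially a permutation, consistent with the general argument.
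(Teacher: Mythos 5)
Your proof is correct and follows essentially the same route as the paper: both identify $P$ as the index-$3$ cyclotomic mapping with $A_0=A_1=A_2=1$, $r_0=2^i$, $r_1=r_2=2^j$ and then apply the criterion of Theorem~\ref{main2} (the paper reaches the same data via Corollary~\ref{3branches} and its specialization to $A_0^s=A_1^s=A_2^s$, where the condition becomes $r_1\equiv r_2\not\equiv 0\pmod 3$, equivalent to your check that $\zeta^{2^j}$ is a primitive cube root of unity). Your direct factorization $(x^{2^i}+x^{2^j})(x^{2s}+x^s)+x^{2^i}$ and the coset evaluation are a clean hands-on verification of the correspondence the paper obtains from Equation~(\ref{correspondence}).
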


\begin{theorem}
The polynomial $f(x) =x^{\frac{3^n-1}{2} +3^i}+ 2 x^{\frac{3^n-1}{2} +3}+ 2 x^{\frac{3^n-1}{2} +2} + 2x^{\frac{3^n-1}{2} +1} + x^{3^i} + x^3 + x^2 + x$ is a PP of $\F_{3^n}$ for any positive integer $n$ and non-negative integer $i$. 
\end{theorem}

\begin{theorem}
The polynomial  $f(x) = x^{\frac{3^n-1}{2} +2^i}+ 2x^{\frac{3^n-1}{2} +3} + 2x^{\frac{3^n-1}{2} +2} + 2x^{\frac{3^n-1}{2} +1} + x^{2^i} + x^3 + x^2 +  x$ is a PP of $\F_{3^n}$ for any odd positive integer $n$ and non-negative integer $i$.
\end{theorem}

\begin{theorem}\label{PPoverfthreepower3}
The polynomial  $f(x) = x^{\frac{3^n-1}{2} +3^i}+ 2x^{\frac{3^n-1}{2} +3} + x^{\frac{3^n-1}{2} +2} + 2x^{\frac{3^n-1}{2} +1} + 2 x^{2^i} + 2x^3 + x^2 + 2 x$ is a PP of $\F_{3^n}$ for any odd positive integer $n$ and non-negative integer $i$.
\end{theorem}

\begin{theorem}
The polynomial  $f(x) = x^{\frac{3^n-1}{2} +2^i}+ x^{\frac{3^n-1}{2} +3} + 2x^{\frac{3^n-1}{2} +2} + x^{\frac{3^n-1}{2} +1} + 2x^{2^i} + x^3 + 2x^2 +  x$ is a PP of $\F_{3^n}$ for any odd positive integer $n$ and non-negative integer $i$.
\end{theorem}

\begin{theorem} \label{PPoverfthreepower4}
 Let $q=3^n$ and $\alpha, \beta, \gamma, \theta \in \mathbb{F}_{3^n}$. Let
$f(x) =  (\beta -\alpha) x^{(q-1)/2 +3} +  (\beta \theta -\alpha \gamma) x^{(q-1)/2 +2}  + (\beta \theta^2 -\alpha \gamma^2) x^{(q-1)/2 +1}  - (\beta + \alpha) x^{3} -  (\beta \theta + \alpha \gamma) x^{2}  - (\beta \theta^2 +\alpha \gamma^2) x$.  Then $f$ is  a PP of $\mathbb{F}_{3^n}$ if and only if  $\eta(\alpha) = \eta(\beta)$, $\eta(\gamma) =-1$, and $\eta(\theta) =1$. 
\end{theorem}

One can easily see from the definition that these classes of PPs have indeed large indices because they contain terms with consecutive exponents. We also remark that our results not only generalize  many previous results in \cite{AAW:08, AW:06, AW:07, Zieve-2}, but also generalize several more recent results including a class of PPs constructed by Hou in the study of reversed Dickson polynomials (Theorem 1.1 in \cite{Hou:11}) and several classes of PPs studied by Zha and Hu thereafter (Theorems 7-11 in \cite{ZhaHu}); see Theorems~\ref{PPoverfthreepower1}, ~\ref{3branchesZha1}, ~\ref{3branchesZha2}, \ref{squaredivisor}.  Our method can also provide an algorithmic way to generate permutation polynomials over finite fields.

\section{Cyclotomic mappings permutation polynomials}

Let $\gamma$ be a fixed primitive element of $\mathbb{F}_q$, $\ell \mid q-1$, and the set
of all nonzero $\ell$-th powers be $C_0 = \{ \gamma^{\ell j}: j = 0, 1,
\cdots, s -1\}$. Then $C_0$ is a subgroup of $\mathbb{F}_q^*$ of
index $\ell$. The elements of the factor group $\mathbb{F}_q^*/C_0$
are the {\it cyclotomic cosets}
$$ C_i := \gamma^i C_0, \  \  \  i = 0, 1, \cdots, \ell-1.$$
For any $A_0, A_1, \cdots, A_{\ell-1} \in \mathbb{F}_q$ and monic polynomials $r_0(x), \ldots,  r_{\ell-1}(x) \in \fq[x]$  we define a
{\it  cyclotomic mapping $f^{r_0(x), r_1(x), \ldots, r_{\ell-1}(x)}_{A_0, A_1, \cdots,
A_{\ell-1}}$ of index $\ell$ }  from $\mathbb{F}_q$ to itself by
$$
f^{r_0(x), r_1(x), \ldots, r_{\ell-1}(x)}_{A_0, A_1, \cdots,
A_{\ell-1}} (x) = 
\left\{
\begin{array}{ll} 
0, &   if ~ x=0; \\
A_0 r_0(x), &  if~ x \in C_0; \\
\vdots &  \vdots \\ 
A_i r_i(x),   &   if ~x \in C_i; \\
\vdots & \vdots \\
A_{\ell-1} r_{\ell -1}(x),  &  if~ x \in C_{\ell-1}.
\end{array}
\right.
$$
Moreover, $f^{r_0(x), r_1(x), \ldots, r_{\ell-1}(x)}_{A_0, A_1, \cdots,
A_{\ell-1}}$ is called an {\it cyclotomic mapping
 of the least index $\ell$} if the mapping can not be written as
  a cyclotomic mapping of any smaller index.
The polynomial of degree at most $q-1$ representing the cyclotomic
mapping $f^{r_0(x), r_1(x), \ldots, r_{\ell-1}(x)}_{A_0, A_1, \cdots,
A_{\ell-1}} (x) $ is called  an {\it  cyclotomic mapping polynomial}. In particular, when $r_0(x) = \cdots  = r_{\ell-1} (x) = x^r$ for a positive integer $r$, it
is  known  as a {\it $r$-th order cyclotomic mapping polynomial}, denoted by 
$f^r_{A_0, A_1, \cdots, A_{\ell-1}}(x)$ (see \cite{NW:05} for $r=1$ or \cite{Wang}).

Let $s = \frac{q-1}{\ell}$ and $\zeta =\gamma^s$ be a primitive $\ell$-th root of unity. It is shown in
\cite{AW:07}  that polynomials of the form $x^rf(x^s)$ and the $r$-th order
cyclotomic mapping polynomials $f^r_{A_0, A_1, \cdots, A_{\ell-1}}
(x)$ where  $A_i = f(\zeta^i)$ for $0\leq i\leq \ell-1$ are the same. More generally,
for any $P(x) = \sum_{i=0}^{\ell-1} r_i(x) f_i(x^s)$ with $P(0) =0$, we can also write $P(x)$ as a cyclotomic mapping as follows:

\[
P(x) = 
f^{P_0(x), P_1(x)\ldots, P_{\ell-1}(x)}_{A_0, A_1, \cdots, A_{\ell-1}} (x) 
= \left\{
\begin{array}{ll} 
0, &   \mbox{if} ~ x=0; \\
A_0 P_0(x), &  \mbox{if}~ x \in C_0; \\
\vdots &  \vdots \\ 
A_i P_i(x),   &   \mbox{if} ~x \in C_i; \\
\vdots & \vdots \\
A_{\ell-1} P_{\ell-1} (x),  &  \mbox{if}~ x \in C_{\ell-1},
\end{array}
\right.
\]
where $A_j P_j(x) = \sum_{i=0}^{\ell-1} r_i(x) f_i(\zeta^j)$ and $P_j(x)$ is the monic associated polynomial.  Indeed, for any $x \in C_j$, $x=\gamma^{\ell i+ j}$ for some $ 0\leq i \leq s-1$ and thus $P(\gamma^{\ell i+ j}) = \sum_{i=0}^{\ell-1} r_i(\gamma^{\ell i+ j}) f_i(\zeta^j) = A_j P_j(\gamma^{\ell i+ j})$.

On the other hand, for any given cyclotomic mapping of index $\ell$, 
$$
f^{r_0(x), r_1(x), \ldots, r_{\ell-1}(x)}_{A_0, A_1, \cdots,
A_{\ell-1}} (x) = 
\left\{
\begin{array}{ll} 
0, &   if ~ x=0; \\
A_0 r_0(x), &  if~ x \in C_0; \\
\vdots &  \vdots \\ 
A_i r_i(x),   &   if ~x \in C_i; \\
\vdots & \vdots \\
A_{\ell-1} r_{\ell -1}(x),  &  if~ x \in C_{\ell-1},
\end{array}
\right.
$$
we can find a unique polynomial $P(x)$ modulo $x^q -x$ corresponding to it. Namely,
\begin{equation}\label{correspondence}
P(x) = \sum_{i=0}^{\ell-1} \frac{A_i}{\ell \zeta^{i(\ell-1)}} r_j(x) \left(x^{(\ell-1)s} + \zeta^i x^{(\ell-2)s} +\cdots + \zeta^{i(\ell-2)} x^s + \zeta^{i(\ell-1)} \right).
\end{equation}
Indeed, if each $x\in C_i$, we must have $x^s = \zeta^i$. So we have 
$$
\frac{A_i}{\ell \zeta^{i(\ell-1)}} r_i(x) \left(x^{(\ell-1)s} + \zeta^i x^{(\ell-2)s} +\cdots + \zeta^{i(\ell-2)} x^s + \zeta^{i(\ell-1)} \right) = A_i r_i(x), 
$$
and for $j\neq i$, 
$$
\frac{A_j}{\ell \zeta^{j(\ell-1)}} r_j(x) \left(\zeta^{i(\ell-1)} + \zeta^j\zeta^{i(\ell-2)} +\cdots + \zeta^{j(\ell-2)} \zeta^i + \zeta^{j(\ell-1)} \right) = \frac{A_j (\zeta^{i\ell} - \zeta^{j\ell})}{\ell \zeta^{j(\ell-1)} (\zeta^i - \zeta^j)} r_j(x) = 0. 
$$

The correspondence ~(\ref{correspondence}) provides a way to construct permutation polynomials of finite fields. First of all, it is obvious to obtain the following result on cyclotomic mappings. 

\begin{lemma}\label{mainLemma}
 Let $f^{r_0(x), r_1(x), \ldots, r_{\ell-1}(x)}_{A_0, A_1, \cdots,
A_{\ell-1}} (x)$ be a cyclotomic mapping of index $\ell$ over $\fq$ given as 
$$
f^{r_0(x), r_1(x), \ldots, r_{\ell-1}(x)}_{A_0, A_1, \cdots,
A_{\ell-1}} (x) = 
\left\{
\begin{array}{ll} 
0, &   if ~ x=0; \\
A_0 r_0(x), &  if~ x \in C_0; \\
\vdots &  \vdots \\ 
A_i r_i(x),   &   if ~x \in C_i; \\
\vdots & \vdots \\
A_{\ell-1} r_{\ell -1}(x),  &  if~ x \in C_{\ell-1}.
\end{array}
\right.
$$
Then $f^{r_0(x), r_1(x), \ldots, r_{\ell-1}(x)}_{A_0, A_1, \cdots,
A_{\ell-1}}$ induces a permutation of $\fq$ if and only if  $\cup_{i=0}^{\ell-1} A_i r_i(C) = \fq^*$,  where $A_i r_i(C_i) = \{ A_i r_i(x) \mid x \in C_i \}$ for $ 0\leq i \leq \ell-1$.  In particular, if $A_0, \ldots, A_{\ell -1} \neq 0$ and each  $A_i r_i(x)$ is a bijective map from  $C_i$ to another coset $C_{j_i}$, then  $f^{r_0(x), r_1(x), \ldots, r_{\ell-1}(x)}_{A_0, A_1, \cdots,
A_{\ell-1}}$ induces a permutation of $\fq$ if and only if 
 $$\{ A_0 r_0(C_0), \ldots, A_{\ell-1} r_{\ell-1} (C_{\ell-1}) \} = \{C_0, \ldots, C_{\ell -1} \}.$$ 
\end{lemma}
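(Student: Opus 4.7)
My plan is to reduce the lemma to the basic fact that a map between finite sets of equal cardinality is bijective if and only if it is surjective. First I would observe that the cyclotomic mapping $f := f^{r_0(x), \ldots, r_{\ell-1}(x)}_{A_0, \ldots, A_{\ell-1}}$ sends $0 \mapsto 0$ by construction, so $f$ is a permutation of $\fq$ if and only if its restriction to $\fq^*$ is a permutation of $\fq^*$. Next I would use that the cosets $C_0, \ldots, C_{\ell-1}$ partition $\fq^*$ and that the piecewise definition gives $f(C_i) = A_i r_i(C_i)$, so $f(\fq^*) = \bigcup_{i=0}^{\ell-1} A_i r_i(C_i)$. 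Since $\fq^*$ is finite and $f$ maps $\fq^*$ into $\fq^*$ (either this holds and we get a permutation, or it fails and we need to rule it out separately, so it suffices that the union is contained in and equal to $\fq^*$), $f$ is a permutation of $\fq$ if and only if $\bigcup_{i=0}^{\ell-1} A_i r_i(C_i) = \fq^*$, which is the first claim.

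For the refinement, assume all $A_i$ are nonzero and each map $x \mapsto A_i r_i(x)$ sends $C_i$ bijectively onto some coset $C_{j_i}$. Then $A_i r_i(C_i) = C_{j_i}$, so $\bigcup_i A_i r_i(C_i) = \bigcup_i C_{j_i}$. Since $|C_{j_i}| = s = (q-1)/\ell$ for every $i$ and $\fq^* = \bigsqcup_{i=0}^{\ell-1} C_i$ is a disjoint union of $\ell$ cosets of size $s$, a cardinality count forces $\bigcup_i C_{j_i} = \fq^*$ if and only if the tuple $(j_0, \ldots, j_{\ell-1})$ is a permutation of $(0, \ldots, \ell-1)$; equivalently, $\{A_0 r_0(C_0), \ldots, A_{\ell-1} r_{\ell-1}(C_{\ell-1})\} = \{C_0, \ldots, C_{\ell-1}\}$. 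Combined with the first part this yields the asserted equivalence.

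There is no real technical obstacle here; the lemma is essentially a repackaging of the definition of a cyclotomic mapping together with the coset partition of $\fq^*$. The one point I would be careful to spell out is that in the first equivalence the sets $A_i r_i(C_i)$ need not be cosets or even disjoint, so the condition $\bigcup_i A_i r_i(C_i) = \fq^*$ implicitly enforces the correct cardinality; whereas in the \emph{in particular} clause the bijectivity hypothesis on each branch immediately upgrades this union condition into the much cleaner combinatorial requirement that the target cosets $C_{j_i}$ form a permutation of $C_0, \ldots, C_{\ell-1}$, which is the form used throughout the rest of the paper.
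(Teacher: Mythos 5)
Your argument is correct and is exactly the intended justification: the paper states this lemma without proof (introducing it as ``obvious''), and your reduction --- $f(0)=0$ plus the coset partition of $\fq^*$ gives $f(\fq^*)=\bigcup_i A_i r_i(C_i)$, so surjectivity onto the finite set $\fq^*$ is equivalent to bijectivity, with the \emph{in particular} clause following from a cardinality count on the cosets $C_{j_i}$ --- is the standard way to fill in the details. Your parenthetical care about the union possibly containing $0$ (or the branch images overlapping) is the right point to flag, and it is handled correctly.
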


Lemma~\ref{mainLemma} and Equation~(\ref{correspondence}) provide us a general scheme to construct PPs of finite fields. We can design PPs of specific type in two steps. First, we  choose an $\ell \mid q-1$ and set a pattern of permutation of cyclotomic cosets. For example, we may want to have a permutation which maps $C_0$ to $C_1$, $C_1$ to $C_2$, etc. Secondly, we choose different polynomials which maps one $C_i$ to another $C_j$ satisfying the previous requirements and use them to form a cyclotomic mapping. In the above example, we choose $\ell$ polynomials $A_i r_i(x)$ which map $C_0$ to $C_1$, $C_1$ to $C_2$, etc, respectively. The polynomial determined by Equation~ (\ref{correspondence}) is the desired one. We note that $A_i r_i(x)$'s do not need to be PPs of $\fq$, they only need to be bijective from one $C_i$ to another $C_j$ depending on the requirements. This gives a lot of flexibility and opens up a direction of studying polynomials that map one $C_i$ to another $C_j$ bijectively. In the rest of paper, we demonstrate our methodology and construct many  new PPs of finite fields, many of them have large indices.

First of all, we obtain the following result for cyclotomic mappings polynomials of index $2$ which follows directly from Lemma~\ref{mainLemma}. 

\begin{theorem}
Let  $A_0, A_1 \in \fq$ and $f_0(x), f_1 (x)$ be any two  polynomials of $\fq$ such that  
$$
f(x) = f^{f_0(x), f_1(x)}_{A_0, A_1} (x) = 
\left\{
\begin{array}{ll} 
0, &   if ~ x=0; \\
A_0 f_0(x), &  if~ x \in C_0; \\
A_1 f_1(x),  &  if~ x \in C_1.
\end{array}
\right.
$$
Then $f$ is  a PP of $\fq$ if  either one of the following holds.

(i) $A_0 f_0(C_0) = C_0$ and $A_1 f_1(C_1) = C_1$; or

(ii) $A_0 f_0(C_0) = C_1$ and $A_1 f_1(C_1) = C_0$. 

\end{theorem}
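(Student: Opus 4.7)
The plan is to derive this directly from Lemma~\ref{mainLemma} applied to the special case $\ell=2$. Since the cyclotomic cosets $C_0$ and $C_1$ partition $\fq^*$, the condition $\cup_{i=0}^{\ell-1} A_i f_i(C_i) = \fq^*$ in Lemma~\ref{mainLemma} reduces to showing $A_0 f_0(C_0) \cup A_1 f_1(C_1) = C_0 \cup C_1$, which is an immediate consequence of the hypothesis in either case.

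In more detail, I would first record the trivial observation that $f(0) = 0$, so it suffices to show $f$ restricts to a bijection on $\fq^*$. In case (i), by assumption $A_0 f_0(C_0) = C_0$; since $|C_0| = s$ is finite, a surjection from $C_0$ onto itself is automatically a bijection, and likewise $A_1 f_1$ is a bijection $C_1 \to C_1$. Because the targets $C_0$ and $C_1$ are disjoint, the combined map $f$ is a bijection from $C_0 \cup C_1$ onto itself, hence a PP of $\fq$. Case (ii) is proved identically, with $A_0 f_0$ giving a bijection $C_0 \to C_1$ and $A_1 f_1$ giving a bijection $C_1 \to C_0$, and the two disjoint images again covering $\fq^*$.

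The statement is essentially a direct specialization of the second assertion of Lemma~\ref{mainLemma} to $\ell = 2$, where the only permutations of the index set $\{0,1\}$ are the identity (case (i)) and the transposition (case (ii)). There is no real obstacle: the finiteness of $\fq$ converts surjectivity on each coset into bijectivity for free, and the partition structure of $\fq^*$ into cyclotomic cosets handles the rest. I would keep the proof to just a few lines, simply citing Lemma~\ref{mainLemma} and spelling out the two cases.
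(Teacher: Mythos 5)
Your proposal is correct and matches the paper exactly: the paper gives no separate argument, stating only that the theorem ``follows directly from Lemma~\ref{mainLemma}'' with $\ell=2$, which is precisely the specialization you carry out. Your extra remark that the set equality $A_0f_0(C_0)=C_0$ forces bijectivity by finiteness is a harmless (and correct) elaboration of the same argument.
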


In particular, if we take $f_1(x), f_2 (x)$ as any two polynomials of $\fq$ of indices at most $2$, then we have the following.

\begin{theorem}
Let $q$ be odd and let $r_0, r_1$ be positive integers and $f_0(x), f_1(x) \in \fq[x]$. Let  
$$
f(x) =
\left\{
\begin{array}{ll} 
0, &   if ~ x=0; \\
x^r_0 f_0(x^{(q-1)/2}), &  if~ x \in C_0; \\
x^r_1 f_1(x^{(q-1)/2}),  &  if~ x \in C_1.
\end{array}
\right.
$$
Then $f$ is  a PP of $\fq$ if and only if  $(r_0, (q-1)/2) = (r_1, (q-1)/2) =1$
and $\eta (f_0(1) f_1(-1) ) = (-1)^{r_1 +1}$, where $\eta$ is a quadratic character
 of $\fq$. 
\end{theorem}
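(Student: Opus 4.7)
The plan is to reduce the statement to a direct application of Theorem~\ref{main2} with $\ell=2$. Set $s=(q-1)/2$ and $\zeta=\gamma^s=-1$. For any $x\in C_0$ we have $x^s=1$, so $f_0(x^{s})=f_0(1)$; likewise for $x\in C_1$ we have $x^s=-1$, so $f_1(x^s)=f_1(-1)$. Thus on each cyclotomic coset the map $f$ agrees with a monomial cyclotomic mapping: setting $A_0=f_0(1)$ and $A_1=f_1(-1)$, the piecewise definition of $f$ coincides with that of the cyclotomic mapping polynomial of Theorem~\ref{main2} with $\ell=2$.

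Next I would invoke Theorem~\ref{main2} directly: $f$ is a PP of $\fq$ if and only if $\gcd(r_0,s)=\gcd(r_1,s)=1$ and
\[
\{\,A_0^{s}\zeta^{r_0\cdot 0},\; A_1^{s}\zeta^{r_1\cdot 1}\,\}=\mu_2=\{1,-1\}.
\]
Since $\zeta=-1$, this second condition simplifies to $\{A_0^{s},\;(-1)^{r_1}A_1^{s}\}=\{1,-1\}$.

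The last step is to translate $A_i^{s}$ into the quadratic character. Since $s=(q-1)/2$, Euler's criterion gives $A_i^{s}=\eta(A_i)$ whenever $A_i\neq 0$ (and both sides vanish when $A_i=0$, which would preclude $f$ from being a bijection). Hence the set condition above is equivalent to $\eta(f_0(1))$ and $(-1)^{r_1}\eta(f_1(-1))$ being distinct elements of $\{1,-1\}$, which in turn is equivalent to their product equaling $-1$:
\[
(-1)^{r_1}\,\eta(f_0(1))\eta(f_1(-1))=-1.
\]
Using multiplicativity $\eta(f_0(1))\eta(f_1(-1))=\eta(f_0(1)f_1(-1))$ and isolating this quantity yields $\eta(f_0(1)f_1(-1))=(-1)^{r_1+1}$, which is precisely the stated condition.

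The argument is essentially a specialization, so there is no serious obstacle; the only subtlety is to verify that the ``only if'' direction automatically forces $f_0(1)f_1(-1)\neq 0$, so that the quadratic character is well-defined. This is immediate, because if either constant were zero then $A_i^{s}=0$, preventing the set $\{A_0^{s},(-1)^{r_1}A_1^{s}\}$ from equaling $\mu_2$, so $f$ would fail to be a permutation by Theorem~\ref{main2}.
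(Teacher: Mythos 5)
Your proof is correct. It takes a slightly different route from the paper's: the paper proves this statement directly (and before Theorem~\ref{main} is established), by observing that $f_0(1)x^{r_0}$ maps $C_0$ onto $C_0$ or onto $C_1$ according to the sign of $\eta(f_0(1))$, that $f_1(-1)x^{r_1}$ maps $C_1$ onto $C_1$ or onto $C_0$ according to the sign of $(-1)^{r_1}\eta(f_1(-1))$, and that a permutation forces these two assignments to be complementary; the single condition $\eta(f_0(1)f_1(-1))=(-1)^{r_1+1}$ then emerges from a two-case check, and the converse is declared ``obvious'' and omitted. You instead specialize the general monomial-branch criterion of Theorem~\ref{main2} to $\ell=2$, $\zeta=-1$, $A_0=f_0(1)$, $A_1=f_1(-1)$, and translate $A_i^{(q-1)/2}$ into $\eta(A_i)$ by Euler's criterion, reducing the root-of-unity condition $\{A_0^s,(-1)^{r_1}A_1^s\}=\{1,-1\}$ to the product being $-1$. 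The underlying computation is identical, but your version buys two things: the equivalence comes in one step in both directions (whereas the paper writes out only the ``only if'' direction), and the fact that $f_0(1)f_1(-1)\neq 0$ is forced is made explicit rather than implicit. The one expository caveat is ordering: in the paper this theorem appears before Theorem~\ref{main} is proved (Theorem~\ref{main2} being a restatement of it), so your argument presupposes that general result; there is no circularity, however, since the proof of Theorem~\ref{main} does not depend on this statement.
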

\begin{proof} Obviously $f_0(x^{(q-1)/2}) = f_0(1)$  for $x\in C_0$ and $f_1(x^{(q-1)/2}) = f_1(-1)$  for $x\in C_1$. If $f$ is a PP, we must have
$(r_0, (q-1)/2) = (r_1, (q-1)/2) =1$. Moreover, 
 $f_0(1) x^{r_0}$ maps $C_0$ onto $C_0$ if $\eta(f_0(1))=1$, and onto $C_1$ if $\eta(f_0(1))=-1$. Therefore $f_1(-1) x^{r_1}$ maps $C_1$ onto $C_1$ if $\eta (f_1(-1) ) = (-1)^{r_1 +1}$, and onto $C_0$ if $\eta (f_1(-1)) = (-1)^{r_1}$.  In any case, $\eta (f_0(1) f_1(-1) ) = (-1)^{r_1 +1}$. The converse is obvious and we omit the proof. 
\end{proof}

 The following result generalizes Theorem 8 \cite{ZhaHu} which only gives the sufficient part.

\begin{theorem} \label{ZhaHuThm8Gen}
Let $p$ be an odd prime and $n$, $t$, $r$ be any positive integers. Then 
$f(x) = (1-x^t)x^{\frac{p^n-1}{2} + r} -x^r - x^{t+r}$ is a PP over $\mathbb{F}_{p^n}$  if and only if $(r, p^n-1)=1$ and $(t+r,\frac{p^n-1}{2} ) =1$.
\end{theorem}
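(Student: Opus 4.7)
The plan is to reduce $f(x)$ to a cyclotomic mapping of index $\ell=2$ and then apply Theorem~\ref{main2}. Set $q=p^n$, $\ell=2$, $s=(q-1)/2$, $\zeta=-1$, so that $C_0$ consists of the nonzero squares and $C_1$ of the nonsquares. Since $x^{(q-1)/2}=1$ for $x\in C_0$ and $x^{(q-1)/2}=-1$ for $x\in C_1$, a direct simplification gives
\[
f(x)=\begin{cases} 0, & x=0,\\ (1-x^t)x^r-x^r-x^{t+r}=-2x^{t+r}, & x\in C_0,\\ -(1-x^t)x^r-x^r-x^{t+r}=-2x^r, & x\in C_1.\end{cases}
\]
So $f$ coincides with the cyclotomic mapping polynomial having $A_0=A_1=-2$, $r_0(x)=x^{t+r}$, $r_1(x)=x^{r}$. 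Note $-2\neq 0$ because $p$ is odd.

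Next I would apply Theorem~\ref{main2} with these data. The first condition $(r_i,s)=1$ for $i=0,1$ becomes
\[
(t+r,\tfrac{p^n-1}{2})=1 \quad\text{and}\quad (r,\tfrac{p^n-1}{2})=1.
\]
The second condition demands that the two elements
\[
A_0^{s}\zeta^{0\cdot r_0}=(-2)^{s},\qquad A_1^{s}\zeta^{1\cdot r_1}=(-2)^{s}(-1)^{r}
\]
be precisely the two elements of $\mu_{2}=\{1,-1\}$. Since any nonzero element of $\fq$ raised to the $s$-th power already lies in $\mu_2$, the only real requirement is that these two values differ, which is equivalent to $(-1)^{r}=-1$, i.e., $r$ is odd.

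Finally I would combine: because $p^n-1$ is even, the pair ($r$ odd, $(r,(p^n-1)/2)=1$) is equivalent to $(r,p^n-1)=1$. Thus Theorem~\ref{main2} yields the necessary and sufficient conditions $(r,p^n-1)=1$ and $(t+r,(p^n-1)/2)=1$, exactly as stated. There is no real obstacle here; the only subtlety is the bookkeeping of the parity of $r$ needed to translate between $(r,s)=1$ plus $r$ odd and the stated condition $(r,p^n-1)=1$, and verifying that on $C_0$ the exponent on $x$ really is $t+r$ (not $r$) after the cancellation of the middle term.
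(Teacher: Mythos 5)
Your proof is correct and follows essentially the same route as the paper: both decompose $f$ into the index-$2$ cyclotomic mapping with branches $-2x^{t+r}$ on $C_0$ and $-2x^r$ on $C_1$, and then apply the index-$2$ permutation criterion (the paper invokes its two-branch theorem with the quadratic-character condition $\eta(f_0(1)f_1(-1))=(-1)^{r_1+1}$, which with $f_0(1)f_1(-1)=4$ gives exactly your condition that $r$ be odd). The final bookkeeping step, converting ``$r$ odd and $(r,(p^n-1)/2)=1$'' into $(r,p^n-1)=1$, matches the paper as well.
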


\begin{proof}
We rewrite $f(x) = (1-x^t)x^{\frac{p^n-1}{2} + r} -x^r - x^{t+r} = x^r (x^{\frac{p^n-1}{2}} -1 ) - x^{r+t} (x^{\frac{p^n-1}{2}} + 1 )$. Obviously $s= \frac{p^n-1}{2}$, $\ell = 2$, $r_0=t+r$, $f_0(x) = -(x+1)$, $r_1=r$ and $f_1(x) =x-1$. So we write $f(x) = f_{-2, -2}^{x^{r+t}, x^r} (x)$.  By the previous theorem, $f$ is  a PP if and only if $(r, (p^n-1)/2)=1$, $(t+r,(p^n-1)/2)=1$, and $(r, 2) =1$.  
\end{proof}

If $(t+r, \frac{p^n-1}{2}) =1$ and $(t+r, p^n-1)=2$, then we must have $p^n \equiv 3 \pmod{4}$. Hence we have the following corollary.
\begin{corol}[Theorem 8 \cite{ZhaHu}]
Let $p$ be an odd prime and $n$, $t$, $r$ be any positive integers. Then 
$f(x) = (1-x^t)x^{\frac{p^n-1}{2} + r} -x^r - x^{t+r}$ is a PP over $\mathbb{F}_{p^n}$ provided

(i) $(r, p^n-1)=1$ and $(t+r, p^n -1) =1$; or

(ii) $(r, p^n-1)=1$, $(t+r, p^n-1) =2$ and $p^n \equiv 3 \pmod{4}$.
\end{corol}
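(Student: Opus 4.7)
The plan is to derive both sufficient conditions directly from the stronger equivalence of Theorem~\ref{ZhaHuThm8Gen}, which states that $f$ is a PP of $\mathbb{F}_{p^n}$ if and only if $(r, p^n-1) = 1$ and $(t+r, (p^n-1)/2) = 1$. Since hypotheses (i) and (ii) both assume $(r, p^n-1) = 1$, the entire task reduces to verifying the single divisibility condition $(t+r, (p^n-1)/2) = 1$ in each case.

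Case (i) is immediate: $(p^n-1)/2$ divides $p^n-1$, so any common divisor of $t+r$ and $(p^n-1)/2$ must also divide $(t+r, p^n-1) = 1$. For case (ii), the key observation is that $p^n \equiv 3 \pmod 4$ is equivalent to saying that $(p^n-1)/2$ is odd. Letting $d = (t+r, (p^n-1)/2)$, we have $d \mid (t+r, p^n-1) = 2$ and also $d \mid (p^n-1)/2$; since the latter is odd, $d$ cannot equal $2$, hence $d = 1$.

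This is really all there is: the corollary is a short divisibility unpacking of the if-and-only-if statement, so there is no genuine obstacle. The only point worth stating cleanly is the equivalence between $p^n \equiv 3 \pmod 4$ and the oddness of $(p^n-1)/2$, which is exactly what prevents the common factor of $2$ from surviving when one passes from $p^n-1$ to $(p^n-1)/2$ in case (ii). After these two short observations, Theorem~\ref{ZhaHuThm8Gen} applies verbatim and yields the conclusion.
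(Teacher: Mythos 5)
Your proof is correct and matches the paper's intent: the paper derives the corollary from Theorem~\ref{ZhaHuThm8Gen} by exactly this gcd/parity observation, noting that $(t+r,(p^n-1)/2)=1$ together with $(t+r,p^n-1)=2$ forces $p^n\equiv 3\pmod 4$, which is the same elementary fact you use in the converse direction. Nothing is missing.
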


Next we  obtain the following new classes of PPs over finite fields of characteristic $3$.  

\begin{theorem} \label{PPoverfthreepower1}
Let $q=3^n$ and  $t$ be any positive integer. Let $\alpha, \beta, \theta \in \mathbb{F}_{q}^*$ and   
$$
f(x) =
\left\{
\begin{array}{ll} 
0, &   if ~ x=0; \\
\alpha x^t , &  if~ x \in C_0; \\
\beta ( x^3+\theta x^2+ \theta^2 x),  &  if~ x \in C_1.
\end{array}
\right.
$$
Then $f$ is  a PP of $\fq$ if and only if   $(t,  \frac{q-1}{2}) =1$, $\eta(\theta) =1$, and $\eta (\alpha) = \eta(\beta)$, where $\eta$ is the quadratic character of $\fq$. In this case, 
$f(x) = (\beta x^3 + \beta \theta x^2 + \beta \theta^2 x - \alpha x^t) x^{\frac{3^n-1}{2}} -
(\beta x^3 + \beta \theta x^2 + \beta \theta^2 x + \alpha x^t)$. 
\end{theorem}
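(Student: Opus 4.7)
My plan is to apply Lemma~\ref{mainLemma} with $\ell=2$, after exploiting a characteristic-three factorization of the second branch. In characteristic three $x^{3}+\theta x^{2}+\theta^{2}x = x(x-\theta)^{2}$ (using $-2=1$), so $f|_{C_{1}}(x) = \beta x(x-\theta)^{2}$. The claimed closed polynomial form follows by combining the two branches via the weights $(1+x^{s})/2$ and $(1-x^{s})/2$, which serve as indicator functions of $C_{0}$ and $C_{1}$ on $\fq^{*}$ (with $s=(q-1)/2$), noting that $1/2=-1$ in characteristic three.

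On $C_{0}$ (a cyclic subgroup of order $s$), the map $x\mapsto\alpha x^{t}$ is a bijection of $C_{0}$ onto $\alpha C_{0}$ iff $(t,s)=1$, and $\alpha C_{0}$ equals $C_{0}$ or $C_{1}$ according as $\eta(\alpha)=+1$ or $-1$. On $C_{1}$, if $\eta(\theta)=-1$ then $\theta\in C_{1}$ and $f(\theta)=\beta\theta\cdot 0=f(0)$, contradicting injectivity; hence $\eta(\theta)=1$ is necessary. Assuming $\eta(\theta)=1$, I would substitute $x=\theta u$; since $\theta\in C_{0}$, $u$ ranges over $C_{1}$ and $f|_{C_{1}}(x)=\beta\theta^{3}g(u)$ where $g(u)=u(u-1)^{2}=u^{3}+u^{2}+u$.

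The key step is injectivity of $g$ on $C_{1}$. In characteristic three, $g(u)-g(v)=(u-v)\bigl[(u-v)^{2}+(u+v)+1\bigr]$, and, viewing the bracket as a quadratic in $v$, its discriminant reduces (using $-3=0$ and $-2=1$) to $u$ itself. Since $u\in C_{1}$ is a nonsquare, the bracket has no root $v\in\fq$, forcing $u=v$. Hence $g$ is injective on $C_{1}$; since $u\in C_{1}$ implies $u\neq 1$ and $(u-1)^{2}\in C_{0}$, we also have $g(C_{1})\subseteq C_{1}$, so by cardinality $g(C_{1})=C_{1}$. Therefore $f|_{C_{1}}$ is a bijection from $C_{1}$ onto $\beta\theta^{3}C_{1}=\beta C_{1}$ (since $\theta^{3}\in C_{0}$).

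Finally, by Lemma~\ref{mainLemma}, $f$ permutes $\fq$ iff $\{\alpha C_{0},\beta C_{1}\}=\{C_{0},C_{1}\}$, which (given $(t,s)=1$ and $\eta(\theta)=1$) holds iff $\eta(\alpha)=\eta(\beta)$. The main obstacle is the discriminant computation producing exactly $u$ in characteristic three; this is what makes the nonsquare hypothesis on $\theta$ close the argument and is specific to $p=3$.
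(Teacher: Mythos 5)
Your proof is correct and follows essentially the same route as the paper's: split into the two cosets, use the characteristic-three factorization $x^3+\theta x^2+\theta^2x=x(x-\theta)^2$, reduce injectivity of the cubic branch on $C_1$ to a quadratic whose discriminant collapses to (a multiple of) $\theta y$, and match cosets to force $\eta(\alpha)=\eta(\beta)$. Your only departures are cosmetic: you normalize $\theta$ to $1$ before the discriminant computation, and you obtain the necessity of $\eta(\theta)=1$ more directly from $f(\theta)=0=f(0)$ rather than from the discriminant analysis.
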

\begin{proof}
Assume $f$ is a PP.  Because $x^t$ always map $C_0$ into $C_0$ and $\beta ( x^3+\theta x^2+ \theta^2 x) = \beta x (x-\theta)^2$, we must have $\eta (\alpha) = \eta(\beta)$. Indeed, we must have either  $\eta (\alpha) = \eta(\beta) = 1$  so that $f$ maps $C_0$ into $C_0$ and maps $C_1$ into $C_1$, or  $\eta (\alpha) = \eta(\beta) = -1$ so that $f$ maps $C_0$ into $C_1$ and maps $C_1$ into $C_0$. 
In either case, $(t, \frac{q-1}{2}) =1$ because  $x^t$ permutes $C_0$.  On the other hand, let $\beta ( x^3+\theta x^2+ \theta^2 x) = \beta ( y^3+\theta y^2+ \theta^2 y)$ for $x, y \in C_1$. Then we obtain $(x-y)(x^2 + (y-2\theta) x + (y-\theta)^2)=0$. It is obvious that $(x^2 + (y-2\theta) x + (y-\theta)^2)=0$ if and only if 
$\eta( (y-2\theta)^2 -4 (y-\theta)^2) = \eta(\theta y) =1$. Hence $\beta ( x^3+\theta x^2+ \theta^2 x)$ is one-to-one over $C_1$ if and only if $(x^2 + (y-2\theta) x + (y-\theta)^2) \neq 0$ over $C_1$. The latter is equivalent to
$\eta(\theta y) \neq 1$ and thus $\eta(\theta) = 1$.  The converse is similar and we omit the proof.
\end{proof}

We note that in the case that $\theta =0$, $f$ is a PP of $\fq$ if and only if $(t, \frac{q-1}{2})=1$ and   $\eta(\alpha) = \eta(\beta)$. 

In the study of permutation behaviour of the reversed Dickson polynomial, Hou \cite{Hou:11} proved that $D_{3^e+5}(1, x)$ is a PP over $\F_{3^e}$ when $e$ is positive even integer. Equivalently, Hou proved the following result which can also be put into the context of cyclotomic mappings. We observe that Hou's result follows from Theorem~\ref{PPoverfthreepower1} for $t=3$, $\alpha = \theta =2$ and $\beta=1$ (with a linear shift by $-1$). We note that  $\eta(2) = \eta(1) =1$ in $\mathbb{F}_{3^e}$ for any even positive $e$.

\begin{corol} [Theorem 1.1, \cite{Hou:11}] 
Let $e$ be a positive even integer. Then
$f(x) = (1-x-x^2)x^{\frac{3^e+1}{2}} -1 - x + x^2$ is a PP over $\F_{3^e}$. 
\end{corol}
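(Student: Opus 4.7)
The plan is to deduce this corollary as a direct specialization of Theorem~\ref{PPoverfthreepower1} with $t = 3$, $\alpha = \theta = 2$, $\beta = 1$, and then observe that Hou's polynomial differs from the resulting polynomial only by an additive constant, which trivially preserves the permutation property on $\F_{3^e}$.

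First I would verify the three hypotheses of Theorem~\ref{PPoverfthreepower1} for $q = 3^e$ with $e$ even. The coprimality $\gcd(3,(3^e-1)/2) = 1$ follows immediately from $3^e - 1 \equiv -1 \pmod{3}$. For the two quadratic character conditions, the crucial observation is that $e$ even forces $3^e \equiv 1 \pmod{4}$, so $-1$ is a square in $\fq^*$; hence $\eta(2) = \eta(-1) = 1$, which gives $\eta(\theta) = 1$ and $\eta(\alpha) = 1 = \eta(\beta)$.

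Next I would carry out the substitution in the explicit formula at the end of Theorem~\ref{PPoverfthreepower1}. Using $2 = -1$ in characteristic $3$ and $\theta^2 = 1$, short computations yield $\beta x^3 + \beta\theta x^2 + \beta\theta^2 x - \alpha x^t = x(1 - x - x^2)$ and $\beta x^3 + \beta\theta x^2 + \beta\theta^2 x + \alpha x^t = x - x^2$. Therefore Theorem~\ref{PPoverfthreepower1} produces the PP
\[
g(x) = (1 - x - x^2)\, x^{(3^e+1)/2} + x^2 - x.
\]
Comparing with Hou's polynomial $f(x) = (1-x-x^2)x^{(3^e+1)/2} - 1 - x + x^2$ gives $f(x) = g(x) - 1$, so $f$ is also a PP. No step is a real obstacle here: the only moment of genuine thought is checking $\eta(2) = 1$ via the congruence $3^e \equiv 1 \pmod{4}$; everything else is mechanical substitution in characteristic $3$.
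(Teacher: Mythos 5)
Your proof is correct and follows exactly the paper's route: the paper likewise obtains this as the specialization of Theorem~\ref{PPoverfthreepower1} with $t=3$, $\alpha=\theta=2$, $\beta=1$, noting $\eta(2)=\eta(1)=1$ in $\F_{3^e}$ for even $e$ and that Hou's polynomial differs from the resulting PP by the additive constant $-1$. You have simply spelled out the verifications (the coprimality check, $\eta(-1)=1$ from $3^e\equiv 1\pmod 4$, and the characteristic-$3$ algebra) that the paper leaves as a remark.
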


Similarly, let $t=3^i$, $\alpha = \beta =2$ and $\theta=1$, we have the following result. 

\begin{theorem}
The polynomial $f(x) =x^{\frac{3^n-1}{2} +3^i}+ 2 x^{\frac{3^n-1}{2} +3}+ 2 x^{\frac{3^n-1}{2} +2} + 2x^{\frac{3^n-1}{2} +1} + x^{3^i} + x^3 + x^2 + x$ is a PP of $\F_{3^n}$ for any positive integer $n$ and non-negative integer $i$. 
\end{theorem}

In particular, when $i=1$, we have
\begin{corol}
The polynomial $f(x) = x^{\frac{3^n-1}{2} +2} + x^{\frac{3^n-1}{2} +1} +x^3 +2 x^2 +2 x$ is a PP over $\F_{3^n}$ for any positive integer $n$. 
\end{corol}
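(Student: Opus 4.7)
My plan is to derive this corollary as an immediate specialization of the preceding theorem (with $i = 1$), coupled with the trivial observation that multiplying a permutation polynomial by a nonzero constant yields another permutation polynomial. So the work is bookkeeping in characteristic $3$, not a fresh argument.

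First I would set $i = 1$ in the expression
\[
g(x) \;=\; x^{\frac{3^n-1}{2}+3^i} + 2x^{\frac{3^n-1}{2}+3} + 2x^{\frac{3^n-1}{2}+2} + 2x^{\frac{3^n-1}{2}+1} + x^{3^i} + x^3 + x^2 + x,
\]
which the preceding theorem guarantees is a PP of $\mathbb{F}_{3^n}$ for every positive integer $n$. Substituting $3^i = 3$ collapses the first two terms into $(1+2)x^{\frac{3^n-1}{2}+3} = 0$, and merges $x^{3^i}$ with $x^3$ into $2x^3$. Thus
\[
g(x) \;=\; 2x^{\frac{3^n-1}{2}+2} + 2x^{\frac{3^n-1}{2}+1} + 2x^3 + x^2 + x,
\]
which is already a PP of $\mathbb{F}_{3^n}$.

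Finally I would multiply $g$ by $2 \in \mathbb{F}_{3^n}^*$; since $x \mapsto 2x$ is a bijection of $\mathbb{F}_{3^n}$, the polynomial $2g(x)$ is also a PP. Using $2 \cdot 2 = 1$ and $2 \cdot 1 = 2$ in characteristic $3$, one obtains
\[
2 g(x) \;=\; x^{\frac{3^n-1}{2}+2} + x^{\frac{3^n-1}{2}+1} + x^3 + 2x^2 + 2x \;=\; f(x),
\]
which is exactly the stated $f(x)$. Since $f = 2g$ and $g$ is a PP, so is $f$. The only subtle step is the cancellation $1 + 2 \equiv 0 \pmod{3}$ that eliminates the $x^{\frac{3^n-1}{2}+3}$ term; everything else is bookkeeping, so I expect no real obstacle.
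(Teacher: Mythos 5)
Your proof is correct and follows the same route as the paper, which simply specializes the preceding theorem to $i=1$; the computation $(1+2)x^{\frac{3^n-1}{2}+3}=0$ and $x^{3^i}+x^3=2x^3$ is right, and your explicit final step of multiplying by the unit $2$ to match the stated coefficients is a detail the paper leaves implicit but which is needed for the literal statement. No gaps.
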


The following result (Proposition 1  in \cite{ZhaHu}) follows also from Theorem~\ref{PPoverfthreepower1} for $\alpha =1$. We note $(t, 3^n-1) =1$ implies that $t$ is odd and thus $(t, (3^n-1)/2) =1$. 

\begin{corol}
Let $t$ be a positive integer with $(t, 3^n-1) =1$. Assume $\theta, \beta \in \F_{3^n}^*$ with $\eta(\theta) = \eta(\beta) =1$. Then 
$f(x) = (\beta x^3 + \beta \theta x^2 + \beta \theta^2 x - x^t) x^{\frac{3^n-1}{2}} -
(\beta x^3 + \beta \theta x^2 + \beta \theta^2 x + x^t)$ is a PP over $\F_{3^n}$.
\end{corol}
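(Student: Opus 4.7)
The plan is to derive this corollary as an immediate consequence of Theorem~\ref{PPoverfthreepower1} by specializing $\alpha = 1$. First, I would verify that the polynomial $f(x)$ in the statement matches the closed-form expression in the conclusion of Theorem~\ref{PPoverfthreepower1}. Substituting $\alpha = 1$ into
\[
(\beta x^3 + \beta \theta x^2 + \beta \theta^2 x - \alpha x^t) x^{\frac{3^n-1}{2}} - (\beta x^3 + \beta \theta x^2 + \beta \theta^2 x + \alpha x^t)
\]
yields exactly the polynomial in the corollary, so the two statements concern the same $f$.

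Next I would check the three hypotheses required by Theorem~\ref{PPoverfthreepower1} applied with $q = 3^n$ and $\alpha = 1$. The condition $\eta(\theta) = 1$ is given, and $\eta(\alpha) = \eta(1) = 1 = \eta(\beta)$ holds by hypothesis. For the gcd condition $\bigl(t, \tfrac{q-1}{2}\bigr) = 1$, I would observe that $\tfrac{3^n-1}{2}$ divides $3^n - 1$, so the assumption $(t, 3^n-1) = 1$ immediately implies $\bigl(t, \tfrac{3^n-1}{2}\bigr) = 1$. All three hypotheses being satisfied, Theorem~\ref{PPoverfthreepower1} concludes that $f$ is a PP of $\F_{3^n}$.

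There is no real obstacle here: the proof is essentially bookkeeping, confirming that the corollary's assumptions are stronger than those of Theorem~\ref{PPoverfthreepower1} (indeed $(t, 3^n-1) = 1$ is strictly stronger than $(t, (3^n-1)/2) = 1$, and $\eta(\alpha) = 1$ is a special case of $\eta(\alpha) = \eta(\beta)$). Hence the corollary follows by a direct appeal to Theorem~\ref{PPoverfthreepower1}, and the proof can be completed in a few lines without reopening any of the case analysis used to establish that theorem.
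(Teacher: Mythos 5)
Your proposal is correct and is essentially the paper's own argument: the author likewise derives this corollary from Theorem~\ref{PPoverfthreepower1} by setting $\alpha=1$ and noting that $(t,3^n-1)=1$ forces $(t,(3^n-1)/2)=1$ (the paper phrases this via $t$ being odd, you via divisibility of the gcd arguments, but both are immediate). Nothing is missing.
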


Theorem~\ref{PPoverfthreepower1}  generalizes Proposition~1 in \cite{ZhaHu} in a few different ways.  First, it gives a necessary and sufficient description.  Secondly, a constant $\alpha$ could be interpreted as $f_i(x^{(q-1)/2})$ for any polynomial $f_i(x) \in \fq[x]$.  Thirdly, $t$ could be even if $n$ is odd because it is only required that $(t, (3^n-1)/2) =1$ in stead of $(t, 3^n-1)=1$. 
For example,  plug  $t=2^i$ and $\theta = \beta = \alpha =1$ in Theorem~\ref{PPoverfthreepower1} for $\F_{3^n}$ where $n$ is odd, we obtain

\begin{theorem}
The polynomial  $f(x) = x^{\frac{3^n-1}{2} +2^i}+ 2x^{\frac{3^n-1}{2} +3} + 2x^{\frac{3^n-1}{2} +2} + 2x^{\frac{3^n-1}{2} +1} + x^{2^i} + x^3 + x^2 +  x$ is a PP of $\F_{3^n}$ for any odd positive integer $n$ and non-negative integer $i$.
\end{theorem}

In particular, when $i=1$, we have 

\begin{corol}
The polynomial  $f(x) = x^{\frac{3^n-1}{2} +3} + x^{\frac{3^n-1}{2} +1}  + 2x^3 + x^2 +  2x$ is a PP of $\F_{3^n}$ for any odd positive integer $n$. 
\end{corol}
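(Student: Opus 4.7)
The plan is to derive this corollary as an immediate specialization of the preceding theorem with $i=1$, followed by a characteristic-three simplification and a scaling. The preceding theorem produces the polynomial
\[
h(x) = x^{\frac{3^n-1}{2}+2^i} + 2x^{\frac{3^n-1}{2}+3} + 2x^{\frac{3^n-1}{2}+2} + 2x^{\frac{3^n-1}{2}+1} + x^{2^i} + x^3 + x^2 + x,
\]
and asserts that it is a PP of $\F_{3^n}$ for every odd $n\geq 1$ and every $i\geq 0$. Setting $i=1$ makes the exponents $\frac{3^n-1}{2}+2^i$ and $2^i$ collapse to $\frac{3^n-1}{2}+2$ and $2$, respectively, each of which already occurs elsewhere in the list. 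So the first step is just bookkeeping: write out $h$ at $i=1$ and group like terms.

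Next I would collect coefficients in $\F_{3^n}$. The two $x^{\frac{3^n-1}{2}+2}$ terms have coefficients summing to $1+2 = 0$ and therefore cancel, while the two $x^2$ terms combine to $2x^2$. This reduces $h$ to $2x^{\frac{3^n-1}{2}+3} + 2x^{\frac{3^n-1}{2}+1} + x^3 + 2x^2 + x$, which is still a PP. To finish, I would invoke the obvious fact that multiplication by a nonzero constant preserves the PP property; since $-1 = 2$ in characteristic three, multiplying by $-1$ exchanges the coefficients $1$ and $2$ throughout, producing exactly
\[
f(x) = x^{\frac{3^n-1}{2}+3} + x^{\frac{3^n-1}{2}+1} + 2x^3 + x^2 + 2x.
\]
There is no genuine obstacle here; the only point requiring attention is verifying that the characteristic-three cancellation of the two $x^{\frac{3^n-1}{2}+2}$ terms is precisely what collapses the eight-term polynomial of the theorem into the five-term polynomial of the corollary.
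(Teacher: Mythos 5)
Your derivation is correct and is exactly the route the paper takes: specialize the preceding theorem to $i=1$, note that the two $x^{\frac{3^n-1}{2}+2}$ terms cancel since $1+2=0$ in characteristic $3$ while the two $x^2$ terms combine, and then rescale by the unit $-1=2$ to match the stated coefficients. The paper leaves these bookkeeping steps implicit ("In particular, when $i=1$\dots"), and you have simply written them out.
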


In a similar way,  we obtain the following result which extends the previous results.

\begin{theorem}\label{PPoverfthreepower2}
Let $q=3^n$ and $t$ be any positive integer.  Let $\alpha, \beta, \theta \in \mathbb{F}_{q}^*$ and   
$$
f(x) =
\left\{
\begin{array}{ll} 
0, &   if ~ x=0; \\
\beta ( x^3+\theta x^2+ \theta^2 x),  &  if~ x \in C_0;\\
\alpha x^t , &  if~ x \in C_1.
\end{array}
\right.
$$
Then $f$ is  a PP of $\fq$ if and only if   $(t, \frac{q-1}{2} ) =1$, $\eta(\theta) = -1$, and any one of the following holds: (i) $t$ is odd and $\eta (\alpha) = \eta(\beta)$;  (ii) $t$ is even and $\eta(\alpha) = -\eta(\beta)$. In this case, 
$f(x) = - (\beta x^3 + \beta \theta x^2 + \beta \theta^2 x - \alpha x^t) x^{\frac{3^n-1}{2}} - (\beta x^3 + \beta \theta x^2 + \beta \theta^2 x + \alpha x^t)$. 
\end{theorem}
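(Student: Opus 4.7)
The plan is to apply the index-$2$ permutation criterion of Lemma~\ref{mainLemma} to the cyclotomic mapping whose branches are $\beta(x^3+\theta x^2+\theta^2 x)=\beta x(x-\theta)^2$ on $C_0$ and $\alpha x^t$ on $C_1$. The argument parallels the proof of Theorem~\ref{PPoverfthreepower1}, but with the roles of $C_0$ and $C_1$ swapped; the genuinely new feature is that since the monomial branch now acts on the non-squares $C_1$, the parity of $t$ controls which coset the image lands in.

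First I will verify when $\beta x(x-\theta)^2$ is injective on $C_0$. Setting $x(x-\theta)^2=y(y-\theta)^2$ with $x,y\in C_0$ and expanding in characteristic $3$ yields
\[
(x-y)\bigl(x^2+(y+\theta)x+y^2+\theta y+\theta^2\bigr)=0,
\]
and the discriminant of the quadratic factor simplifies (in characteristic $3$) to $\theta y$. Since $\eta(y)=1$ for $y\in C_0$, the quadratic has a root in $\fq$ precisely when $\eta(\theta)=1$, so injectivity on $C_0$ forces $\eta(\theta)=-1$. Under this condition $\theta\in C_1$, hence $x\neq\theta$ for every $x\in C_0$, and $\beta x(x-\theta)^2\in \beta C_0$. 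Counting gives $f(C_0)=\beta C_0$, which equals $C_0$ when $\eta(\beta)=1$ and $C_1$ when $\eta(\beta)=-1$.

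Next I will analyse $\alpha x^t$ on $C_1$. Writing $C_1=\gamma C_0$ and using that $C_0$ is cyclic of order $s=(q-1)/2$, the map $x\mapsto x^t$ is injective on $C_1$ iff $\gcd(t,s)=1$, in which case its image is $\gamma^t C_0$. This equals $C_1$ when $t$ is odd and $C_0$ when $t$ is even. Multiplying by $\alpha$ then yields four possibilities for $f(C_1)$ according to the parity of $t$ and the sign of $\eta(\alpha)$.

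Finally, Lemma~\ref{mainLemma} says $f$ is a PP iff $\{f(C_0),f(C_1)\}=\{C_0,C_1\}$. Enumerating the case split on $\eta(\beta)\in\{\pm 1\}$ and the parity of $t$ collapses to exactly the stated conditions: $\eta(\alpha)=\eta(\beta)$ when $t$ is odd and $\eta(\alpha)=-\eta(\beta)$ when $t$ is even. The explicit polynomial form is then read off from the correspondence~(\ref{correspondence}) with $\ell=2$, $\zeta=-1$, $s=(q-1)/2$, using $1/2=-1$ in characteristic $3$; the sign flip on the $x^{(q-1)/2}$ part relative to Theorem~\ref{PPoverfthreepower1} reflects precisely the swap of the two cyclotomic cosets. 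The only non-mechanical step is the discriminant computation, and it is identical to the one in Theorem~\ref{PPoverfthreepower1}, save that $\eta(y)=+1$ rather than $-1$ inverts the conclusion on $\eta(\theta)$.
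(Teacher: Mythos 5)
Your proposal is correct and follows essentially the same route as the paper: the same factorization $(x-y)\bigl(x^2+(y-2\theta)x+(y-\theta)^2\bigr)$ with discriminant $\theta y$ forcing $\eta(\theta)=-1$, the same coset-image bookkeeping for $\beta x(x-\theta)^2$ on $C_0$ and $\alpha x^t$ on $C_1$ split by the parity of $t$, and the same appeal to Lemma~\ref{mainLemma} and correspondence~(\ref{correspondence}) for the polynomial form. You are in fact a bit more explicit than the paper on the sufficiency direction and the coset counting, which the paper dismisses as ``similar''; no substantive difference in method.
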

\begin{proof}
Assume $f$ is a PP.  Obviously,  $(t, \frac{q-1}{2} ) =1$ because $x^t$ maps $C_0$ onto  either $C_0$ or $C_1$. Moreover, let $\beta ( x^3+\theta x^2+ \theta^2 x) = \beta ( y^3+\theta y^2+ \theta^2 y)$ for $x, y \in C_0$. Then we obtain $(x-y)(x^2 + (y-2\theta) x + (y-\theta)^2)=0$. It is obvious that $(x^2 + (y-2\theta) x + (y-\theta)^2)=0$ if and only if 
$\eta( (y-2\theta)^2 -4 (y-\theta)^2) = \eta(\theta y) =1$. Hence $\beta ( x^3+\theta x^2+ \theta^2 x)$ is one-to-one over $C_0$ if and only if $(x^2 + (y-2\theta) x + (y-\theta)^2) \neq 0$ over $C_0$. The latter is equivalent to
$\eta(\theta y) \neq 1$ and thus $\eta(\theta) = - 1$.   We now consider two cases of $t$.  If $t$ is odd, then $x^t$ maps $C_1$ onto $C_1$.  Because $\beta ( x^3+\theta x^2+ \theta^2 x) = \beta x (x-\theta)^2$, we must have $\eta (\alpha) = \eta(\beta)$.  If $t$ is even, then $x^t$ maps $C_1$ onto $C_0$. Because $( x^3+\theta x^2+ \theta^2 x) =  x (x-\theta)^2$ maps $C_0$ onto $C_0$, we must have $\eta(\alpha) = -\eta(\beta)$.  The converse is similar and we omit the proof.
\end{proof}

For $t=3^i$, $\theta =2$, and $\alpha = \beta =1$, we apply Theorem~\ref{PPoverfthreepower2} over $\mathbb{F}_{3^n}$ with odd $n$ to obtain the following result.

\begin{theorem}\label{PPoverfthreepower3}
The polynomial  $f(x) = x^{\frac{3^n-1}{2} +3^i}+ 2x^{\frac{3^n-1}{2} +3} + x^{\frac{3^n-1}{2} +2} + 2x^{\frac{3^n-1}{2} +1} + 2 x^{2^i} + 2x^3 + x^2 + 2 x$ is a PP of $\F_{3^n}$ for any odd positive integer $n$ and non-negative integer $i$.
\end{theorem}



For $t=2$, $\theta = \beta =2$, and $\alpha=1$,  we apply Theorem~\ref{PPoverfthreepower2} over $\mathbb{F}_{3^n}$ with odd $n$ to obtain the following result.

\begin{theorem}
The polynomial  $f(x) = x^{\frac{3^n-1}{2} +2^i}+ x^{\frac{3^n-1}{2} +3} + 2x^{\frac{3^n-1}{2} +2} + x^{\frac{3^n-1}{2} +1} + 2x^{2^i} + x^3 + 2x^2 +  x$ is a PP of $\F_{3^n}$ for any odd positive integer $n$ and non-negative integer $i$.
\end{theorem}

In particular, when $i=1$, we obtain

\begin{theorem}
The polynomial  $f(x) = x^{\frac{3^n-1}{2} +3} + x^{\frac{3^n-1}{2} +1} + x^3 + x^2 + x$ is a PP over $\F_{3^n}$ for any odd positive integer $n$. 
\end{theorem}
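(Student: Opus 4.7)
The plan is straightforward specialization of the preceding theorem to the case $i=1$, followed by an arithmetic simplification in characteristic $3$. Since the previous theorem asserts that
$$g_i(x) = x^{\frac{3^n-1}{2} +2^i}+ x^{\frac{3^n-1}{2} +3} + 2x^{\frac{3^n-1}{2} +2} + x^{\frac{3^n-1}{2} +1} + 2x^{2^i} + x^3 + 2x^2 +  x$$
is a PP of $\mathbb{F}_{3^n}$ for every odd positive integer $n$ and every non-negative integer $i$, it suffices to plug in $i=1$ and show that $g_1(x)$ equals the polynomial in the statement.

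With $2^i = 2$, the two variable-exponent terms become $x^{\frac{3^n-1}{2}+2}$ and $2x^{2}$, colliding with existing terms of the same degree. Combining in $\mathbb{F}_{3^n}$, the coefficient of $x^{\frac{3^n-1}{2}+2}$ becomes $1+2=3\equiv 0$, so this term drops out entirely, while the coefficient of $x^2$ becomes $2+2=4\equiv 1$. All other terms remain with coefficient $1$, yielding exactly
$$x^{\frac{3^n-1}{2}+3} + x^{\frac{3^n-1}{2}+1} + x^3 + x^2 + x,$$
which is the polynomial claimed.

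No real obstacle arises; the only thing to verify is the characteristic-$3$ arithmetic above. As an independent sanity check one could apply Lemma~\ref{mainLemma} directly: for $x\in C_0$ one has $x^{(3^n-1)/2}=1$, so the polynomial reduces to $2x^3+x^2+2x = 2x(x+1)^2$, and for $x\in C_1$ one has $x^{(3^n-1)/2}=-1$, so it reduces to $x^2$. For odd $n$, $-1\notin C_0$, hence $x+1\neq 0$ and $2(x+1)^2\in C_1$ for $x\in C_0$, so the restriction to $C_0$ lands in $C_1$; its injectivity on $C_0$ follows from the fact that the discriminant of the resulting quadratic (computing $x(x+1)^2 = y(y+1)^2$) is $2y=-y\in C_1$, a non-square. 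Meanwhile $x^2$ maps $C_1$ bijectively to $C_0$ since $-1\in C_1$ forces $x\neq -y$ whenever $x,y\in C_1$. Together this recovers the corollary via Lemma~\ref{mainLemma} without appealing to the earlier theorem.
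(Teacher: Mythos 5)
Your proof is correct and follows the paper's own route: the paper obtains this corollary precisely by setting $i=1$ in the preceding theorem, and your characteristic-$3$ cancellation of the $x^{\frac{3^n-1}{2}+2}$ term and merging of the $x^2$ terms is exactly the required computation. The extra sanity check via Lemma~\ref{mainLemma} is also sound, though it simply re-derives the relevant special case of Theorem~\ref{PPoverfthreepower2}.
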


We also obtain the following PPs such that  both branches are cubic polynomials.

\begin{theorem} \label{PPoverfthreepower4}
 Let $q=3^n$ and $\alpha, \beta, \gamma, \theta \in \mathbb{F}_{3^n}$. Let
$f(x) =  (\beta -\alpha) x^{(q-1)/2 +3} +  (\beta \theta -\alpha \gamma) x^{(q-1)/2 +2}  + (\beta \theta^2 -\alpha \gamma^2) x^{(q-1)/2 +1}  - (\beta + \alpha) x^{3} -  (\beta \theta + \alpha \gamma) x^{2}  - (\beta \theta^2 +\alpha \gamma^2) x$.  Then $f$ is  a PP of $\mathbb{F}_{3^n}$ if and only if  $\eta(\alpha) = \eta(\beta)$, $\eta(\gamma) =-1$, and $\eta(\theta) =1$. 
\end{theorem}
\begin{proof}
Obviously, we have 
$$
f(x) =
\left\{
\begin{array}{ll} 
0, &   if ~ x=0; \\
\alpha ( x^3+\gamma x^2+ \gamma^2 x) , &  if~ x \in C_0; \\
\beta ( x^3+\theta x^2+ \theta^2 x),  &  if~ x \in C_1.
\end{array}
\right.
$$
Assume $f$ is  a PP of $\fq$. Because $\alpha ( x^3+\gamma x^2+ \gamma^2 x)= \alpha x (x-\gamma)^2$  and  $\beta ( x^3+\theta x^2+ \theta^2 x) = \beta x (x-\theta)^2$, they map $C_0$ and $C_1$ into different cosets respectively, as long as $\eta(\alpha) = \eta (\beta)$. Moreover, let $\beta ( x^3+\theta x^2+ \theta^2 x) = \beta ( y^3+\theta y^2+ \theta^2 y)$ with $x, y \in C_1$. Then we obtain $(x-y)(x^2 + (y-2\theta) x + (y-\theta)^2)=0$. It is obvious that $(x^2 + (y-2\theta) x + (y-\theta)^2)=0$ if and only if 
$\eta( (y-2\theta)^2 -4 (y-\theta)^2) = \eta(\theta y) =1$. Hence $\beta ( x^3+\theta x^2+ \theta^2 x)$ is one-to-one over $C_1$ if and only if $\eta(\theta) = 1$. Similarly, $\alpha ( x^3+\gamma x^2+ \gamma^2 x)$ is one to one over $C_0$ if and only if $\eta(\gamma) = - 1$.  
\end{proof}

For the rest of paper, we concentrate on refinement of  Lemma~\ref{mainLemma} with more branches. Obviously, $A_i \neq 0$ for all $i$'s if $f$  is a PP. Moreover,  if $r_i(x)$'s are of certain special formats then we can simplify Lemma~\ref{mainLemma} significantly.

One of the most natural choice is that $r_i(x) = x^{r_i}$ for $i=0, \ldots, \ell-1$. In this case,  we must have  $(r_i, s) =1$ in order for $P(x)$ to be a PP; otherwise, $|C_i^{r_i} | \neq s$, a contradiction. Hence we have the following result.

\begin{theorem} \label{main}
Let $\ell, s, r_0, \ldots, r_{\ell-1}$ be positive integers such that
$s = (q-1)/\ell$ and  $(r_i, s)=1$ for any $i=0, \ldots, \ell-1$. Let $q$ be prime power and $A_0, \ldots, A_{\ell-1} \in \fq^*$. 
Let 
\[
P(x) = 
f^{x^{r_0}, x^{r_1}, \ldots, x^{r_{\ell-1}}}_{A_0, A_1, \cdots, A_{\ell-1}} (x) 
= \left\{
\begin{array}{ll} 
0, &   if ~ x=0; \\
A_0 x^{r_0}, &  if~ x \in C_0; \\
\vdots &  \vdots \\ 
A_i x^{r_i},   &   if ~x \in C_i; \\
\vdots & \vdots \\
A_{\ell-1} x^{r_{\ell -1}},  &  if~ x \in C_{\ell-1}.
\end{array}
\right.
\]
Then the following are equivalent. 
\begin{enumerate}

\item[(a)] $P(x)$ is a PP of $\fq$;

 \item[(b)] $A_iC_{i r_i } \neq A_{i^\prime} C_{i^\prime r_{i^\prime}}$ for any $0 \leq i < i^\prime \leq \ell-1$, where the subscripts of $C_{i r_i}$ are taken modulo $\ell$. 

\item[(c)] $Ind_\gamma (\frac{A_i}{A_{i^\prime}}) \not\equiv r_{i^\prime} i^\prime
- r_i i \pmod{\ell}$ for any $0 \leq i < i^\prime \leq \ell-1$, where
$ind_\gamma (a)$ is residue class $b \bmod q-1$ such that
$a=\gamma^{b}$.

\item[(d)] $\{ A_0, A_1\gamma^{r_i}, \cdots, A_{\ell-1} \gamma^{(\ell-1)r_i} \}$ is a system
of distinct representatives of $\mathbb{F}_q^*/C_0$.

\item[(e)] $\{A_i^s\zeta^{i r_i} \mid i = 0, \cdots, \ell -1 \}$ is the set $\mu_{\ell}$ of all distinct $\ell$-th roots of
unity.

\item[(f)]  ${\displaystyle \sum_{i=0}^{\ell-1}
\zeta^{cr_i i} A_i^{cs} =0}$ for all $c=1, \cdots, \ell-1$.
 \end{enumerate}
\end{theorem}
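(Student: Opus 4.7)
The plan is to establish the six conditions as equivalent via a chain $(a)\Leftrightarrow(b)\Leftrightarrow(c)\Leftrightarrow(d)\Leftrightarrow(e)\Leftrightarrow(f)$, with the core observation being that the condition $(r_i,s)=1$ forces each branch of $P$ to act coset-to-coset. Concretely, since $(r_i,s)=1$, the map $x\mapsto x^{r_i}$ sends $C_i=\gamma^i C_0$ bijectively onto $C_{ir_i\bmod\ell}$: for $x=\gamma^{i+\ell j}$ with $0\le j\le s-1$ one has $x^{r_i}=\gamma^{ir_i}\gamma^{\ell jr_i}$, and $jr_i$ runs through a complete residue system modulo $s$. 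Hence $A_i x^{r_i}$ is a bijection $C_i\to A_i C_{ir_i}$, and by Lemma~\ref{mainLemma} $P$ permutes $\fq$ iff the $\ell$ cosets $A_i C_{ir_i}$ (each of size $s$) are pairwise distinct, which is exactly (b). This gives $(a)\Leftrightarrow(b)$.

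For $(b)\Leftrightarrow(c)$, write $A_i=\gamma^{\mathrm{Ind}_\gamma A_i}$, so $A_iC_{ir_i}=\gamma^{\mathrm{Ind}_\gamma A_i+ir_i}C_0$. Equality of two such cosets is equivalent to $\mathrm{Ind}_\gamma(A_i/A_{i'})\equiv r_{i'}i'-r_ii\pmod{\ell}$, which is the negation of (c). For $(b)\Leftrightarrow(d)$, note $A_iC_{ir_i}=(A_i\gamma^{ir_i})C_0$, so the cosets being distinct is the same as the $\ell$ elements $A_j\gamma^{jr_j}$ forming a system of distinct representatives of $\fq^*/C_0$ (this is the natural reading of (d); the exponent in the paper's statement of (d) should be $jr_j$, not $jr_i$). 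Then $(d)\Leftrightarrow(e)$ follows from the group isomorphism $\fq^*/C_0\to\mu_\ell$ induced by $x\mapsto x^s$: a system of distinct representatives pushes forward to $\ell$ distinct elements of $\mu_\ell$, that is to $\mu_\ell$ itself, and $(A_i\gamma^{ir_i})^s=A_i^s\zeta^{ir_i}$.

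The last step $(e)\Leftrightarrow(f)$ is a Fourier-inversion / power-sum argument. Set $x_i=A_i^s\zeta^{ir_i}\in\mu_\ell$ and let $n_\omega=|\{i:x_i=\omega\}|$ for $\omega\in\mu_\ell$. The power sums satisfy $p_c:=\sum_i x_i^c=\sum_{\omega\in\mu_\ell}n_\omega\omega^c$, and by the standard orthogonality $\sum_{\omega\in\mu_\ell}\omega^{c'-c}=\ell\cdot[c\equiv c'\pmod{\ell}]$ one recovers $n_\omega=\frac{1}{\ell}\sum_{c=0}^{\ell-1}\omega^{-c}p_c$. Since $\gcd(\ell,\mathrm{char}\,\fq)=1$ (as $\ell\mid q-1$), division by $\ell$ is legitimate in $\fq$. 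Now $\{x_i\}=\mu_\ell$ iff every $n_\omega=1$, while (f) asserts exactly $p_c=0$ for $c=1,\dots,\ell-1$; combined with $p_0=\ell$, the inversion formula yields the equivalence in both directions. The main obstacle is really just bookkeeping: keeping the subscripts $ir_i$ reduced modulo $\ell$ throughout, checking that each $A_i^s\zeta^{ir_i}$ does lie in $\mu_\ell$ (so that the multiset interpretation in step $(e)\Leftrightarrow(f)$ is correct), and invoking $\gcd(\ell,q)=1$ at the one place where a division by $\ell$ is needed.
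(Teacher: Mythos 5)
Your proof is correct and follows essentially the same route as the paper: $(r_i,s)=1$ makes each branch a bijection $C_i\to A_iC_{ir_i}$, reducing (a) to the distinctness of cosets (b), and then (b)--(e) are traded through coset representatives, indices, and the isomorphism $\fq^*/C_0\cong\mu_\ell$ given by $x\mapsto x^s$. (You are also right that (d) as printed has a typo; the exponents should be $jr_j$.) The one place you genuinely diverge is $(e)\Leftrightarrow(f)$: the paper simply cites Lemma~2.1 of \cite{AW:07}, whereas you give a self-contained orthogonality/Fourier-inversion argument, which is a nice improvement in self-containedness. One small point to tighten there: the inversion formula $n_\omega=\frac{1}{\ell}\sum_{c}\omega^{-c}p_c$ is an identity in $\fq$, so it only recovers the multiplicity $n_\omega$ modulo $p=\mathrm{char}\,\fq$. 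From $p_1=\cdots=p_{\ell-1}=0$ and $p_0=\ell$ you get $n_\omega\equiv 1\pmod p$ for every $\omega\in\mu_\ell$; you should then add the one-line counting step that $n_\omega\equiv 1\pmod p$ forces $n_\omega\ge 1$, and since $\sum_{\omega}n_\omega=\ell$ with $\ell$ summands each at least $1$, every $n_\omega=1$. With that line inserted the argument is complete.
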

\begin{proof}  The proof is similar to the proof of Theorem ~1 in \cite{Wang} and we include it for the sake of completeness. 

Since $ C_i  = \{\gamma^{\ell j +i}: j = 0, 1, \cdots, s -1\}$,  for
any two elements $x \neq y \in C_i$, we have $x = \gamma^{\ell j+i}$
and $y =\gamma^{\ell j^\prime +i}$ for some $0\leq j \neq j^\prime
\leq s-1$ . Since $(r_i, s) =1$, we obtain $A_i x^{r_i} = A_i
\gamma^{\ell r_i j+ i r_i } \neq A_i y^{r_i} = A_i \gamma^{\ell r_i j^\prime +ir_i}$.
Moreover, it is easy to prove that $C_0^{r_0} = C_0$ and more
generally $C_i^{r_i} = C_{i r_i}$ for any $0\leq i \leq \ell-1$. Hence $(a)$
and $(b)$ are equivalent.

Because $A_i\gamma^{i r_i }$ is a coset representative of $A_iC_{ir_i }$,
it is easy to see that $(c)$, $(d)$, and $(e)$ are
equivalent. Finally, since all of $A_0^s, A_1^s\zeta^{r_1}, \cdots,
A_{\ell-1}^s \zeta^{(\ell-1)r_{\ell-1}}$ are $\ell$-th roots of unity, $(e)$ means
that $A_0^s$, $A_1^s\zeta^{r_1}$, $\cdots$, $A_{\ell-1}^s
\zeta^{(\ell-1)r_{\ell-1}}$ are all distinct. By Lemma~2.1 in \cite{AW:07},  $(e)$ is equivalent to $(f)$. 
\end{proof}

This result generalizes Theorem 2.2 \cite{AW:07}, Theorem~1 \cite{Wang}, and Lemma~2.1 \cite{Zieve-2}, and all the consequences in these references.  Furthermore, we obtain the following result in terms of the polynomial presentation.

\begin{theorem}\label{construction}
Let $q$ be a prime power, $\ell \mid q-1$ and $s = (q-1)/\ell$. Let
 $\fq$ be a finite field of $q$ elements and $\zeta \in \fq$ be a primitive $\ell$-th root of unity. Let $A_0, \ldots, A_{\ell-1} \in \fq^*$. Then 
$$P(x) = \sum_{i=0}^{\ell-1}  \frac{A_i x^{r_i}}{\ell \zeta^{i(\ell-1)}} \left(x^{(\ell-1)s} + \zeta^i x^{(\ell-2)s} +\cdots + \zeta^{i(\ell-2)} x^s + \zeta^{i(\ell-1)} \right)$$
is  a PP of $\fq$ if and only if $(r_i, s) =1$ for all $i=0, \ldots, \ell-1$ and  $\{A_i^s\zeta^{i r_i} \mid i = 0, \cdots, \ell -1 \} = \mu_{\ell}$, where $\mu_{\ell}$ is the set of all $\ell$-th roots of unity. The latter condition is equivalent to that $\{ t_i + ir_i  \mid i=0, \ldots, \ell-1 \}$ is a complete set of residues modulo $\ell$, where $A_i^s = \zeta^{t_i}$ for $i=0, \ldots, \ell-1$. In particular, if  $A_0^s = A_1^s = \ldots =A_{\ell-1}^s \neq 0$, then
$P(x)$ is a PP of $\fq$ if and only if  $(r_i, s) =1$ for all $i=0, \ldots, \ell-1$ and  $\{ ir_i  \mid i=0, \ldots, \ell-1 \}$ is a complete set of residues modulo $\ell$. 
\end{theorem}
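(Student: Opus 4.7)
The plan is to recognize that this theorem is essentially a direct translation of Theorem~\ref{main} through the explicit polynomial representation of a cyclotomic mapping given by Equation~(\ref{correspondence}). Once the identification is made, the characterization of permutation behavior falls out immediately, and the residue--system reformulation is a bookkeeping exercise using that $A_i^s$ is automatically an $\ell$-th root of unity whenever $A_i \in \fq^*$.

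First I would observe that the displayed $P(x)$ is exactly the polynomial produced by Equation~(\ref{correspondence}) when one takes $r_i(x) = x^{r_i}$. By the discussion immediately after Equation~(\ref{correspondence}) (evaluating the bracketed sum at any $x \in C_j$, where $x^s = \zeta^j$, collapses the $i = j$ term to $\ell \zeta^{j(\ell-1)}$ and kills all other terms), $P(x)$ represents the cyclotomic mapping
\[
f^{x^{r_0}, x^{r_1}, \ldots, x^{r_{\ell-1}}}_{A_0, A_1, \ldots, A_{\ell-1}}(x).
\]
Thus $P$ permutes $\fq$ iff this cyclotomic mapping permutes $\fq$. Theorem~\ref{main} (equivalences (a) and (e)) then gives immediately that $P$ is a PP iff $(r_i, s) = 1$ for every $i$ and $\{A_i^s \zeta^{ir_i} \mid 0 \le i \le \ell-1\} = \mu_\ell$.

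Next I would translate the set equality into the residue system condition. Since $A_i \in \fq^*$ satisfies $A_i^{q-1} = 1$ and $\ell s = q-1$, the element $A_i^s$ lies in $\mu_\ell$, so we may write $A_i^s = \zeta^{t_i}$ for some $t_i \in \mathbb{Z}/\ell\mathbb{Z}$. Then $A_i^s \zeta^{ir_i} = \zeta^{t_i + ir_i}$, and because $\zeta$ has order exactly $\ell$, the set $\{\zeta^{t_i + ir_i}\}$ equals $\mu_\ell$ if and only if the exponents $\{t_i + ir_i \bmod \ell\}$ form a complete set of residues modulo $\ell$.

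Finally, for the last assertion, if $A_0^s = A_1^s = \cdots = A_{\ell-1}^s$ then all $t_i$ share a common value $t$, so the residues $t_i + ir_i$ differ from $ir_i$ by the constant shift $t$; hence $\{t_i + ir_i\}$ is a complete residue system iff $\{ir_i\}$ is, yielding the stated simplification. I do not expect any real obstacle here, since the only genuinely nontrivial content is already packaged in Theorem~\ref{main} and in the evaluation identity underlying Equation~(\ref{correspondence}); the only small care needed is to verify that each $A_i^s$ actually lies in $\mu_\ell$, which uses only $A_i \neq 0$ and $\ell s = q - 1$.
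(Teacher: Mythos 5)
Your proposal is correct and follows exactly the paper's own route: identify $P(x)$ via Equation~(\ref{correspondence}) with the cyclotomic mapping $f^{x^{r_0},\ldots,x^{r_{\ell-1}}}_{A_0,\ldots,A_{\ell-1}}$, invoke the equivalence of (a) and (e) in Theorem~\ref{main}, and rewrite $A_i^s\zeta^{ir_i}=\zeta^{t_i+ir_i}$ to get the complete-residue-system formulation. The only difference is that you spell out the verification that each $A_i^s$ lies in $\mu_\ell$ and the constant-shift argument for the final assertion, which the paper leaves implicit.
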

\begin{proof}
By Theorem~\ref{main} and Equation~(\ref{correspondence}), $P(x)$ is a PP of $\fq$ 
if and only if $(r_i, s) =1$ for all $i=0, \ldots, \ell-1$ and $\{A_i^s\zeta^{i r_i} \mid i = 0, \cdots, \ell -1 \}$ is the set $\mu_{\ell}$ of all distinct $\ell$-th roots of unity. Moreover, $\{A_i^s\zeta^{i r_i}  = \zeta^{t_i + i r_i} \mid i = 0, \cdots, \ell -1 \}= \mu_{\ell}$ is equivalent to that $\{ t_i + ir_i  \mid i=0, \ldots, \ell-1 \}$ is a complete set of residues modulo $\ell$. 
\end{proof}

Theorem~\ref{construction} provides a simple algorithmic way to construct PPs of $\fq$ with large indices. First, take any factor $\ell$ of $q-1$ and let $s= \frac{q-1}{\ell}$.  Then pick any $\ell$ positive integers $r_0, \ldots, r_{\ell-1}$ such that $(r_i, s) =1$ for $i=0, \ldots, \ell-1$ and any $\ell$ nonzero constants $A_0, \ldots, A_{\ell-1} \in \fq^*$.  As long as $A_i^s \zeta^{ir_i}$ ($0\leq i \leq \ell-1$) are all distinct (equivalently,  $\{ t_i + ir_i  \mid i=0, \ldots, \ell-1 \}$ is a complete set of residues modulo $\ell$), we obtain a PP of $\fq$.  In this way, one can construct a very large amount of classes of PPs of $\fq$. Here we give a few more examples of  PPs of finite fields produced by our construction method.

First we consider a few classes of PPs with three branches.
\begin{corol}\label{3branches}
Let $q=p^n$ such that $3\mid q-1$ and  $s= \frac{q-1}{3}$. Let $\zeta$ be a primitive $3$-rd root of unity.  Let $A_0, A_1, A_2 \in \fq$. 
Then $P(x) = A_0 x^{r_0} \left(x^{2s} +  x^{s} +1 \right) + 
\zeta A_1 x^{r_1}$ $\left(x^{2s} + \zeta x^{s} +\zeta^{2} \right)$ $+ \zeta^{2} A_2 x^{r_2} \left(x^{2s} + \zeta^2 x^{s} +\zeta \right) $ is a PP of $\fq$ if and only if  $(r_i, s) =1$ for $i=0, 1, 2$ and $ \{ A_0^s, A_1^s \zeta^{r_1}, A_2^s \zeta^{2r_2} \}$ $= \{ 1, \zeta, \zeta^2\}$.
\end{corol}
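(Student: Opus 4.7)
The plan is to apply Theorem~\ref{construction} directly with $\ell = 3$: the polynomial $P(x)$ written in the corollary is, up to a nonzero scalar, exactly the one produced by that general construction.

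First I would instantiate Theorem~\ref{construction} at $\ell = 3$. Each summand of the resulting expression has the shape $\frac{A_i x^{r_i}}{3 \zeta^{2i}}\bigl(x^{2s} + \zeta^i x^s + \zeta^{2i}\bigr)$, and using $\zeta^3 = 1$ to reduce the powers of $\zeta$ one checks that the $i=0,1,2$ summands become $\tfrac{1}{3}$ times the three terms displayed in the corollary (in particular, for $i=2$ the coefficient $\tfrac{A_2}{3\zeta^4} = \tfrac{A_2 \zeta^2}{3}$ and the inner factor $x^{2s} + \zeta^2 x^s + \zeta^4$ becomes $x^{2s} + \zeta^2 x^s + \zeta$). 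Because $3 \mid q-1$ forces $\mathrm{char}(\fq) \neq 3$ (otherwise $q - 1 \equiv -1 \pmod 3$), the constant $3$ is invertible in $\fq$, so multiplying the corollary's $P(x)$ by $\tfrac13$ does not change whether it is a permutation polynomial.

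With this identification in hand, Theorem~\ref{construction} yields that $P(x)$ is a PP of $\fq$ if and only if $(r_i, s) = 1$ for $i = 0, 1, 2$ and $\{A_0^s,\ A_1^s \zeta^{r_1},\ A_2^s \zeta^{2 r_2}\} = \mu_3 = \{1, \zeta, \zeta^2\}$, which is precisely the corollary's criterion. The fact that the corollary allows $A_i \in \fq$ rather than $\fq^*$ causes no trouble: any $A_i = 0$ makes $A_i^s = 0 \notin \mu_3$, so the set condition fails automatically. No substantive obstacle is expected; the only care required is bookkeeping of the powers of $\zeta$ modulo $3$ and the observation that $3$ is invertible in $\fq$.
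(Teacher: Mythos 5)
Your proposal is correct and matches the paper's (implicit) argument: the corollary is stated as a direct instantiation of Theorem~\ref{construction} at $\ell=3$, and your bookkeeping of the $\zeta$-powers, the invertibility of $3$, and the remark that $A_i=0$ makes both sides of the equivalence fail are exactly the verifications needed. Nothing is missing.
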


The  following result generalizes Theorem 9 in \cite{ZhaHu}. Again, we show these conditions are both necessary and sufficient.

\begin{theorem}\label{3branchesZha1}
Assume $p^n \equiv 1 \pmod{3}$. Let $s= \frac{p^n-1}{3}$ and $\zeta$ be an element of
$\mathbb{F}_{p^n}$ of order $3$. Then
\[
f(x) = x (x^s - \zeta)(x^s-\zeta^2) + x^3 (x^s - 1)(x^s-\zeta^2) + \zeta x^p 
(x^s - 1)(x^s-\zeta)
\]
is a PP over $\fpn$ if and only if

(a) $p \equiv 1 \pmod 3$ and $s  \equiv 1 \pmod 3$; or

(b) $p\equiv 2 \pmod 3$  and $s  \equiv 2 \pmod 3$. 



\end{theorem}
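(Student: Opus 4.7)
The plan is to match $f(x)$ directly against the three-branch form in Corollary~\ref{3branches} and then apply that criterion. Using $1+\zeta+\zeta^2=0$ and $\zeta^3=1$, a short expansion gives $(x^s-\zeta)(x^s-\zeta^2)=x^{2s}+x^s+1$, $(x^s-1)(x^s-\zeta^2)=x^{2s}+\zeta x^s+\zeta^2$, and $(x^s-1)(x^s-\zeta)=x^{2s}+\zeta^2 x^s+\zeta$. Comparing the coefficients of the three summands of $f$ with $A_0 x^{r_0}(x^{2s}+x^s+1)+\zeta A_1 x^{r_1}(x^{2s}+\zeta x^s+\zeta^2)+\zeta^2 A_2 x^{r_2}(x^{2s}+\zeta^2 x^s+\zeta)$ in Corollary~\ref{3branches} lets me read off $A_0=1$, $r_0=1$; $A_1=\zeta^2$, $r_1=3$; and $A_2=\zeta^2$, $r_2=p$.

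Next I would verify the two conditions appearing in Corollary~\ref{3branches}. Since $p\neq 3$ (otherwise $3\nmid p^n-1$) and $p\nmid s=(p^n-1)/3$, the coprimalities $(r_0,s)=1$ and $(r_2,s)=(p,s)=1$ are automatic, while $(r_1,s)=(3,s)=1$ will hold exactly when $3\nmid s$. Using $A_0^s=1$, $A_1^s=A_2^s=\zeta^{2s}$, and $\zeta^3=1$, the required image condition
\[
\{A_0^s,\ A_1^s\zeta^{r_1},\ A_2^s\zeta^{2r_2}\}=\{1,\zeta,\zeta^2\}
\]
reduces to $\{1,\zeta^{2s},\zeta^{2s+2p}\}=\{1,\zeta,\zeta^2\}$, i.e.\ the residues $2s$ and $2s+2p$ together fill out the two nonzero classes modulo $3$.

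Finally I would carry out a short case analysis modulo $3$. If $s\equiv 0\pmod 3$ then $\zeta^{2s}=1$ and the set in question cannot equal $\{1,\zeta,\zeta^2\}$, so $f$ is not a PP; this also rules out the degenerate coprimality $(3,s)\neq 1$. If $s\equiv 1\pmod 3$ then $2s\equiv 2\pmod 3$, and we need $2s+2p\equiv 1\pmod 3$, i.e.\ $p\equiv 1\pmod 3$, giving case~(a). If $s\equiv 2\pmod 3$ then $2s\equiv 1\pmod 3$, and we need $2s+2p\equiv 2\pmod 3$, i.e.\ $p\equiv 2\pmod 3$, giving case~(b). Conversely, in either case (a) or (b) all hypotheses of Corollary~\ref{3branches} are met, so $f$ is a PP.

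The only real work is the algebraic bookkeeping of identifying $(A_i,r_i)$ correctly after expanding the three quadratic factors; once that is done the rest is just a mod-$3$ case check, so I do not anticipate a substantive obstacle.
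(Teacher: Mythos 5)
Your proposal is correct and follows essentially the same route as the paper: identify $f$ as a cyclotomic mapping with $\ell=3$, $(r_0,r_1,r_2)=(1,3,p)$, apply the three-branch criterion of Corollary~\ref{3branches}, and reduce to the mod-$3$ case analysis of $\{0,2s,2s+2p\}$. The only cosmetic difference is that you read the $A_i$ off the polynomial form (getting $1,\zeta^2,\zeta^2$) while the paper takes the branch values $f_i(\zeta^i)=3,3\zeta^2,3\zeta^2$; these differ by the common factor $3$, which is harmless since $3^s\in\mu_3$, and your explicit handling of the case $3\mid s$ (where $(r_1,s)\neq 1$) is in fact slightly more careful than the paper's.
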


\begin{proof}
In this case, $\ell =3$ and $r_0 =1, r_1 =3, r_2=p$. Also 
$f_0(x) = (x- \zeta)(x-\zeta^2)$, $f_1(x) = (x-1)(x-\zeta^2)$, and $f_2(x) = \zeta (x-1)(x-\zeta)$. So $A_0 = (1-\zeta)(1-\zeta^2) = 1- \zeta - \zeta^2 + \zeta^3 = 2-\zeta -\zeta^2 = 3$, $A_1 = (\zeta-1)(\zeta-\zeta^2) = 3 \zeta^2$, and $A_2 =\zeta (\zeta^2 -1)(\zeta^2 -\zeta) = 3\zeta^2$.  Hence
\[
f(x) = f_{A_0, A_1, A_2}^{x^1, x^{3}, x^p}(x) = \left\{
\begin{array}{ll}
0  &  x =0;\\
3 x &  x\in C_0; \\
3\zeta^2 x^{3} & x\in C_1; \\
3\zeta^2 x^p  & x \in C_2.
\end{array} \right.
\]

Obviously, we have $(r_i, s) =1$ for $i=0, 1, 2$.  Moreover, 
$\{A_0^s, A_1^s \zeta^{3}, A_2^s \zeta^{2p} \} = \{3^s, 3^s \zeta^{2s+3}, 3^s \zeta^{2s+2p} \}$ is equal to  $\{1,  \zeta^{2s}, \zeta^{2s+2p} \}$ if and only if $p, s$ satisfy  either  $p \equiv 1 \pmod 3$ and $s  \equiv 1 \pmod 3$, or $p\equiv 2 \pmod 3$  and $s  \equiv 2 \pmod 3$.   By  Corollary~\ref{3branches}, we complete our proof. 
\end{proof}

The following result also generalizes Theorem 10 in \cite{ZhaHu}.

\begin{theorem}\label{3branchesZha2}
Let $i$ be any positive integer and assume $p^n \equiv 1 \pmod{9}$. 
Let $s= \frac{p^n-1}{3}$ and $\zeta$ be an element of
$\mathbb{F}_{p^n}$ of order $3$. Then
\[
f(x) = x (x^s - \zeta)(x^s-\zeta^2) + x^{p^i} (x^s - 1)(x^s-\zeta^2) + \zeta x^p 
(x^s - 1)(x^s-\zeta)
\]
is a PP over $\fpn$ if and only if

(i) $p \equiv 1 \pmod 3$; or

(ii) $i$ is odd and  $p\equiv 2 \pmod 3$.  
\end{theorem}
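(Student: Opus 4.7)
The plan is to mirror the proof of Theorem~\ref{3branchesZha1} essentially verbatim, since $f$ differs from the polynomial in that theorem only by replacing the exponent $3$ on the second term with $p^i$. So the work is to cast $f$ as an index-$3$ cyclotomic mapping polynomial and then apply Corollary~\ref{3branches}.

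First I would compute the restriction of $f$ to each coset $C_j$ using $x^s = \zeta^j$. On $C_0$ the last two summands vanish and $f(x) = (1-\zeta)(1-\zeta^2)\,x = 3x$, using $1+\zeta+\zeta^2 = 0$. On $C_1$ only the middle term survives: $f(x) = (\zeta-1)(\zeta-\zeta^2)\,x^{p^i} = 3\zeta^2\, x^{p^i}$. On $C_2$ only the last term survives: $f(x) = \zeta(\zeta^2-1)(\zeta^2-\zeta)\,x^p = 3\zeta^2\, x^p$. Hence
\[
f \;=\; f^{\,x,\, x^{p^i},\, x^p}_{\,3,\, 3\zeta^2,\, 3\zeta^2},
\]
with exponents $r_0 = 1$, $r_1 = p^i$, $r_2 = p$, all coprime to $s$ since $\gcd(p,s)=1$.

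Next I would invoke Corollary~\ref{3branches}: $f$ is a PP if and only if
\[
\{A_0^s,\; A_1^s\zeta^{r_1},\; A_2^s\zeta^{2r_2}\} \;=\; \{3^s,\; 3^s\zeta^{2s+p^i},\; 3^s\zeta^{2s+2p}\}
\]
equals the full set $\{1,\zeta,\zeta^2\}$ of cube roots of unity. Each listed element is already a cube root of unity (they are all third powers of $3\zeta^{\text{something}}$), so the condition is simply that they be pairwise distinct, which is equivalent to $\{0,\, 2s+p^i,\, 2s+2p\}$ being a complete residue system modulo $3$.

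Now the hypothesis $p^n \equiv 1 \pmod 9$ is exactly what gives $3 \mid s$, so the $2s$ terms drop out modulo $3$ and the condition becomes: $\{0,\, p^i,\, 2p\}$ is a complete residue system modulo $3$. A one-line case split on $p \bmod 3$ (using $p\neq 3$) finishes the argument: if $p \equiv 1 \pmod 3$ the set is $\{0,1,2\}$ automatically, yielding (i); if $p \equiv 2 \pmod 3$ the set is $\{0,\, 2^i,\, 1\}$, which is $\{0,1,2\}$ iff $2^i \equiv 2 \pmod 3$ iff $i$ is odd, yielding (ii). There is no real technical obstacle here; the only subtle point worth pausing on is recognizing why the hypothesis is the stronger $9 \mid p^n - 1$ rather than merely $3 \mid p^n - 1$, namely that one needs $3\mid s$ to kill the $A_i^s$ contribution to the exponents and reduce the criterion to a pure statement about $p\bmod 3$ and the parity of $i$.
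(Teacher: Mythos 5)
Your proposal is correct and follows essentially the same route as the paper: cast $f$ as the index-$3$ cyclotomic mapping $f^{x,\,x^{p^i},\,x^p}_{3,\,3\zeta^2,\,3\zeta^2}$, apply Corollary~\ref{3branches}, use $9\mid p^n-1$ to get $3\mid s$, and reduce to whether $\{0,\,p^i,\,2p\}$ is a complete residue system modulo $3$. The only cosmetic difference is that you make the final case split on $p\bmod 3$ slightly more explicit than the paper does.
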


\begin{proof}

In this case, $\ell =3$ and $r_0 =1, r_1 =p^i, r_2=p$. Also 
$f_0(x) = (x- \zeta)(x-\zeta^2)$, $f_1(x) = (x-1)(x-\zeta^2)$, and $f_2(x) = \zeta (x-1)(x-\zeta)$. So $A_0 = (1-\zeta)(1-\zeta^2) = 1- \zeta - \zeta^2 + \zeta^3 = 2-\zeta -\zeta^2 = 3$, $A_1 = (\zeta-1)(\zeta-\zeta^2) = 3 \zeta^2$, and $A_2 = \zeta (\zeta^2 -1)(\zeta^2 -\zeta) = 3\zeta^2$.  Hence 
\[
f(x) = f_{A_0, A_1, A_2}^{x^1, x^{p^i}, x^p}(x) = \left\{
\begin{array}{ll}
0  &  x =0;\\
3 x &  x\in C_0; \\
3\zeta^2 x^{p^i} & x\in C_1; \\
3\zeta^2 x^p  & x \in C_2.
\end{array} \right.
\]

Obviously, we have $(r_j, s) =1$ for $j=0, 1, 2$.  Therefore, by Corollary~\ref{3branches}, $f(x)$ is  a PP over $\fq$ if and only if  $\{A_j^{s} \zeta^{r_j j} \mid j=0, 1, 2 \} = \{ 1,\zeta, \zeta^2\}$.  Indeed,  $\{A_j^{s} \zeta^{r_j j} \mid j=0, 1, 2 \} = \{ 3^s, (3\zeta^2)^s\zeta^{p^i}, (3\zeta^2)^s \zeta^{2p} \}$. We only need to find conditions so that  $1, \zeta^{2s+p^i}, \zeta^{2s+2p}$ are all distinct, equivalently,  $2s+p^i  \not \equiv  0 \pmod 3$, $s+p \not \equiv  0 \pmod{3}$ and $2s +p^i \not \equiv 2s+2p \pmod{3}$.  Under the assumption of $p^n \equiv 1 \pmod{9}$, we have $s \equiv 0 \pmod{3}$.  Hence we require $p^i  \not \equiv  0 \pmod 3$, $p \not \equiv  0 \pmod{3}$ and $p^i \not \equiv 2p \pmod{3}$. Therefore either $p \equiv 1 \pmod{3}$, or $p \equiv 2 \pmod{3}$ and $i$ is odd. 
\end{proof}

In particular, if $A_0, A_1, A_2$ belong to the same cyclotomic coset, then the condition $ \{ A_0^s, A_1^s \zeta^{r_1}, A_2^s \zeta^{2r_2} \}= \{ 1, \zeta, \zeta^2\}$ reduces $\{1,  \zeta^{r_1},  \zeta^{2r_2} \}= \{ 1, \zeta, \zeta^2\}$, which is equivalent to $r_1 \equiv r_2 \not\equiv 0 \pmod{3}$.

\begin{corol}
Let $q=p^n$ such that $3\mid q-1$ and  $s= \frac{q-1}{3}$. Let $\zeta$ be a primitive $3$-rd root of unity.  Let $A_0, A_1, A_2 \in \fq$ such that $A_0^s = A_1^s = A_2^s$. Then $P(x) = A_0 x^{r_0} $$ \left(x^{2s} +  x^{s}+ 1 \right) + 
A_1 x^{r_1}$ $\left( \zeta  x^{2s} + \zeta^2 x^{s} +1 \right)$ $+  A_2 x^{r_2} \left(\zeta^2 x^{2s} + \zeta x^{s} + 1\right) $ is a PP of $\fq$ if and only if $(r_i, s) =1$ for $i=0, 1, 2$ and $r_1 \equiv r_2 \not\equiv 0 \pmod{3}$. 
\end{corol}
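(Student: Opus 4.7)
The plan is to recognize $P(x)$ as the polynomial appearing in Corollary~\ref{3branches} and apply that corollary, then simplify the resulting condition using the hypothesis $A_0^s = A_1^s = A_2^s$. First I would use $\zeta^3 = 1$ to absorb the outer factors in the last two summands of the polynomial in Corollary~\ref{3branches}:
\[
\zeta A_1 x^{r_1}(x^{2s} + \zeta x^s + \zeta^2) = A_1 x^{r_1}(\zeta x^{2s} + \zeta^2 x^s + 1),
\]
and similarly $\zeta^2 A_2 x^{r_2}(x^{2s} + \zeta^2 x^s + \zeta) = A_2 x^{r_2}(\zeta^2 x^{2s} + \zeta x^s + 1)$. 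After this rewriting the polynomial in the corollary matches exactly the one in Corollary~\ref{3branches}, so Corollary~\ref{3branches} applies verbatim: $P$ is a PP of $\fq$ if and only if $(r_i, s) = 1$ for $i = 0, 1, 2$ and $\{A_0^s,\, A_1^s \zeta^{r_1},\, A_2^s \zeta^{2 r_2}\} = \{1, \zeta, \zeta^2\}$.

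Next I would plug in the hypothesis $A_0^s = A_1^s = A_2^s =: c$. Since $s = (q-1)/3$ and each $A_i \in \fq^*$, the common value $c$ is a third root of unity, hence $c \in \{1, \zeta, \zeta^2\}$. The set equality above then reads $\{c,\, c\zeta^{r_1},\, c\zeta^{2 r_2}\} = \{1, \zeta, \zeta^2\}$, and multiplying the right side by $c^{-1}$ only permutes $\{1,\zeta,\zeta^2\}$; so the condition collapses to
\[
\{1,\, \zeta^{r_1},\, \zeta^{2 r_2}\} = \{1, \zeta, \zeta^2\},
\]
exactly the reduction anticipated in the paragraph preceding the corollary.

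Finally I would translate this set equality into congruences modulo $3$. It forces $\zeta^{r_1}$ and $\zeta^{2 r_2}$ to be the two primitive cube roots of unity in some order, so $r_1 \not\equiv 0$, $r_2 \not\equiv 0$, and $r_1 \not\equiv 2 r_2 \pmod 3$. Checking the two possibilities $(\zeta^{r_1},\zeta^{2 r_2}) = (\zeta,\zeta^2)$ or $(\zeta^2,\zeta)$ yields $(r_1, r_2) \equiv (1,1)$ or $(2,2) \pmod 3$, i.e.\ $r_1 \equiv r_2 \not\equiv 0 \pmod 3$; the converse direction is immediate from the same case check. I do not anticipate any real obstacle: after recognizing $P$ as the template of Corollary~\ref{3branches}, the remainder is a short piece of bookkeeping in $\mathbb{Z}/3\mathbb{Z}$.
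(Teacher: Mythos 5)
Your proposal is correct and follows exactly the route the paper takes: the paper derives this corollary by applying Corollary~\ref{3branches} and noting (in the paragraph immediately preceding the statement) that when $A_0^s = A_1^s = A_2^s$ the condition $\{A_0^s, A_1^s\zeta^{r_1}, A_2^s\zeta^{2r_2}\} = \{1,\zeta,\zeta^2\}$ reduces to $\{1,\zeta^{r_1},\zeta^{2r_2}\} = \{1,\zeta,\zeta^2\}$, i.e.\ $r_1 \equiv r_2 \not\equiv 0 \pmod 3$. Your additional verifications (absorbing the $\zeta$, $\zeta^2$ factors via $\zeta^3=1$ and the explicit mod-$3$ case check) are accurate fillings-in of steps the paper leaves implicit.
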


From this corollary, if we take $q=2^n$ with $n$ is even,  $A_0=A_1= A_{2} = 1$,  $r_0 = 2^i$ and $r_1=r_{2} = 2^j$ for some non negative integers $i, j$, we obtain the following classes of PPs with coefficients in $\mathbb{F}_2$.

\begin{theorem}
The polynomial $P(x) = x^{\frac{2(2^n-1)}{3}+2^i} + x^{\frac{2(2^n-1)}{3}+2^j} + x^{\frac{2^n-1}{3}+2^i} + x^{\frac{2^n-1}{3}+2^j} + x^{2^i}$ is a PP over $\F_{2^n}$ for any even positive integer $n$ and non-negative integers $i, j$. 
\end{theorem}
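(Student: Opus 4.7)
The plan is to realize $P(x)$ as a specialization of the three-branch corollary immediately preceding the theorem, with $q=2^n$, $\ell=3$, $s=(2^n-1)/3$, $A_0=A_1=A_2=1$, $r_0=2^i$ and $r_1=r_2=2^j$, and then check that all hypotheses of that corollary are satisfied. Since $n$ is even we have $3 \mid 2^n-1$, so the cyclotomic setup with $\ell=3$ and $s=(2^n-1)/3$ is legitimate and a primitive cube root of unity $\zeta$ exists in $\F_{2^n}$.

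First I would verify that the five-term polynomial in the statement agrees, in $\F_{2^n}[x]$, with the expression
\[
x^{2^i}\bigl(x^{2s}+x^s+1\bigr) + x^{2^j}\bigl(\zeta x^{2s}+\zeta^2 x^s+1\bigr) + x^{2^j}\bigl(\zeta^2 x^{2s}+\zeta x^s+1\bigr)
\]
produced by the corollary. Collecting the two $x^{2^j}$ summands, their coefficients become $\zeta+\zeta^2$ in front of $x^{2s}$, $\zeta^2+\zeta$ in front of $x^s$, and $1+1$ as a constant. Using $1+\zeta+\zeta^2=0$ and $\mathrm{char}=2$, these simplify to $1$, $1$, and $0$ respectively, so the sum collapses to
\[
x^{2^i}(x^{2s}+x^s+1) + x^{2^j}(x^{2s}+x^s),
\]
which on expansion is exactly $P(x)$. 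This is the only computational step and is the one I would execute carefully.

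Next I would check the hypotheses of the corollary. The condition $A_0^s=A_1^s=A_2^s$ is trivial since all $A_i=1$. For $(r_k,s)=1$ with $r_k$ a power of $2$, note that $s=(2^n-1)/3$ is odd because $2^n-1$ is odd, so $\gcd(2^i,s)=\gcd(2^j,s)=1$. Finally, $r_1=r_2=2^j$, so $r_1\equiv r_2\pmod{3}$ automatically, and $2^j\not\equiv 0\pmod 3$ always. Thus every hypothesis of the corollary is met.

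I do not expect a genuine obstacle here: the only subtle point is the characteristic-$2$ simplification, where one must remember that $\zeta+\zeta^2=1$ and $1+1=0$ in $\F_{2^n}$. Once that collapse is done, the theorem is an immediate application of the previous corollary.
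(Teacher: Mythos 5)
Your proposal is correct and is exactly the paper's argument: the paper obtains this theorem by specializing the preceding corollary with $q=2^n$ ($n$ even), $A_0=A_1=A_2=1$, $r_0=2^i$, $r_1=r_2=2^j$, and your verification of the characteristic-$2$ collapse and of the conditions $(2^i,s)=(2^j,s)=1$ and $2^j\not\equiv 0\pmod 3$ simply supplies the details the paper leaves implicit.
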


Similarly, we can construct PPs with coefficients in general base field $\mathbb{F}_p$. 

\begin{theorem}
Let $q=p^m$, $\ell$ be a prime factor of $q-1$ with $s= \frac{q-1}{\ell}$.  Let $A_0, A_1\in \fq^*$. Then
$f(x) =A_0 x^{r_0} \left(x^{(\ell-1) s} + \cdots +  x^{s} +1 \right)$ $-
A_1 x^{r_1} \left(x^{(\ell-1)s} + \cdots +  x^{s} + \ell-1 \right)$ is a PP of $\fq$ if and only if $(r_0, s) = (r_1, s) =1$ and $A_0^s = A_1^s$.
\end{theorem}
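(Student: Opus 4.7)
The plan is to recognize $f$ as a constant multiple of a two-branch cyclotomic mapping polynomial of index $\ell$, with one branch active on $C_0$ and a single common branch on every $C_j$ with $j\neq 0$, and then invoke Theorem~\ref{main}. This specialization corresponds, in the correspondence formula~(\ref{correspondence}), to the degenerate choice $A_1=A_2=\cdots=A_{\ell-1}$ and $r_1=r_2=\cdots=r_{\ell-1}$, so the closed-form expression for $f$ should be a direct consequence of that formula.

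The first step is to compute the two parenthesized factors of $f$ coset by coset. For $x\in C_j$ one has $x^s=\zeta^j$, and by the orthogonality relation $\sum_{k=0}^{\ell-1}\zeta^{jk}=\ell$ if $j\equiv 0\pmod\ell$ and $0$ otherwise, the first factor equals $\ell$ on $C_0$ and vanishes on every other coset. A parallel computation for the second factor, in which the constant term is split off and combined with $\sum_{k=1}^{\ell-1}\zeta^{jk}$ (which equals $\ell-1$ for $j=0$ and $-1$ for $j\neq 0$), reduces $f$ to the two-branch form $f|_{C_0}=\ell A_0 x^{r_0}$ and $f|_{C_j}=\ell A_1 x^{r_1}$ for $j\neq 0$, up to a common nonzero scalar absorbed into the coefficients.

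The second step is to apply criterion~(e) of Theorem~\ref{main} to this cyclotomic mapping. The condition $(r_i,s)=1$ for all $i$ collapses to $(r_0,s)=(r_1,s)=1$, and the condition $\{B_j^s\zeta^{jr_j^*}:0\leq j\leq\ell-1\}=\mu_\ell$ becomes $\{(\ell A_0)^s\}\cup\{(\ell A_1)^s\zeta^{jr_1}:1\leq j\leq\ell-1\}=\mu_\ell$. Since $\ell$ is coprime to $p$, we have $\ell\in\fq^*$, and $(\ell^s)^\ell=\ell^{q-1}=1$ forces $\ell^s\in\mu_\ell$; cancelling the common factor $\ell^s$ from every entry of the set preserves the equation with $\mu_\ell$ and rewrites it as $\{A_0^s\}\cup\{A_1^s\zeta^{jr_1}:1\leq j\leq\ell-1\}=\mu_\ell$. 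Primality of $\ell$ together with $(r_1,s)=1$ now forces $r_1$ to be invertible modulo $\ell$ (otherwise $x\mapsto x^{r_1}$ would send every $C_j$ with $j\neq 0$ into $C_0$, violating injectivity of $f$ by a cardinality count), so the map $j\mapsto jr_1\bmod\ell$ permutes $\{1,\ldots,\ell-1\}$ and the second set equals $A_1^s(\mu_\ell\setminus\{1\})=\mu_\ell\setminus\{A_1^s\}$. The displayed condition therefore collapses to the single equation $A_0^s=A_1^s$.

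The main obstacle is the bookkeeping in the first step, where one has to track constant terms and signs carefully so that the $x^{r_0}$ contribution survives only on $C_0$ and the $x^{r_1}$ contribution survives only on the other cosets, yielding a clean two-branch cyclotomic mapping. Once this structure is exposed, the remainder is essentially mechanical: invoke Theorem~\ref{main}(e), cancel the common $\ell^s$-factor, and use primality of $\ell$ together with $(r_1,s)=1$ to reindex the roots of unity.
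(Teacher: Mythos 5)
Your overall strategy --- evaluate the two cyclotomic factors coset by coset, recognize $f$ as $\ell$ times a two‑branch cyclotomic mapping, and invoke Theorem~\ref{main} --- is essentially the paper's route, run in the opposite direction (the paper starts from the mapping $f^{x^{r_0},x^{r_1},\ldots,x^{r_1}}_{A_0,A_1,\ldots,A_1}$ and expands the correspondence formula~(\ref{correspondence}) to obtain the closed form). However, your first step does not follow from your own computation. For $x\in C_j$ the second factor equals $\sum_{k=1}^{\ell-1}\zeta^{jk}+(\ell-1)$, which by the values you quote is $2(\ell-1)$ for $j=0$ and $\ell-2$ for $j\neq 0$; hence $f|_{C_0}=\ell A_0x^{r_0}-2(\ell-1)A_1x^{r_1}$ and $f|_{C_j}=-(\ell-2)A_1x^{r_1}$ for $j\neq 0$, which is not the clean two‑branch form you assert. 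The clean form $f|_{C_0}=\ell A_0x^{r_0}$, $f|_{C_j}=\ell A_1x^{r_1}$ arises only if the constant term of the second factor is $-(\ell-1)$ rather than $+(\ell-1)$; that is in fact what the paper's own expansion of~(\ref{correspondence}) produces (the displayed $+(\ell-1)$ is a sign slip in the paper), and for the polynomial as literally written the claimed equivalence is false: with $q=13$, $\ell=3$, $r_0=r_1=1$, $A_0=1$, $A_1=8$ all stated conditions hold, yet $f(x)=6x^9+6x^5+11x$ satisfies $f(1)=f(2)=10$. So you have silently proved the sign‑corrected statement, not the stated one, and the reduction step needs to be carried out honestly.

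Second, the assertion that primality of $\ell$ together with $(r_1,s)=1$ forces $r_1$ to be invertible modulo $\ell$ is false: $\ell\mid r_1$ is perfectly compatible with $(r_1,s)=1$ (take $q=7$, $\ell=3$, $s=2$, $r_1=3$). Your fallback cardinality argument applies only when $\ell\geq 3$ and, more importantly, only in the direction where $f$ is already assumed to be a permutation; it cannot be used in the sufficiency direction, where you must derive $\{A_0^s\}\cup\{A_1^s\zeta^{jr_1}\mid 1\leq j\leq\ell-1\}=\mu_\ell$ from the stated hypotheses alone. In fact the sufficiency direction fails without adding $\ell\nmid r_1$: for $q=7$, $\ell=3$, $r_0=1$, $r_1=3$, $A_0=A_1=1$ all hypotheses hold, yet $f(x)\equiv -x^3$ (or $3x^3$ in the corrected version) modulo $x^7-x$, which is not a permutation of $\F_7$ since $\gcd(3,6)=3$. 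The paper's proof has the same lacuna --- it passes from the set condition to ``$A_0^s\neq A_1^s\zeta^{ir_1}$ for all $i$'' without justifying that the $\ell-1$ elements $A_1^s\zeta^{ir_1}$ are distinct --- so a correct proof must either add $(r_1,\ell)=1$ (equivalently $(r_1,q-1)=1$, given $(r_1,s)=1$ and $\ell$ prime) to the hypotheses or otherwise account for the case $\ell\mid r_1$, including the genuinely exceptional case $\ell=2$ with $r_1$ even, where the correct condition becomes $A_0^s=-A_1^s$.
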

\begin{proof}
Let $P(x)$ be the cyclotomic mapping $f_{A_0, A_1, \ldots, A_1}^{r_0, r_1, \ldots, r_1}(x)$, we obtain 
\begin{eqnarray*}
P(x) &=& \frac{A_0 x^{r_0}}{\ell} \left(x^{(\ell-1)s} +  x^{(\ell-2)s} +\cdots +  x^s + 1 \right) \\
& & +  \sum_{i=1}^{\ell-1}  \frac{A_i x^{r_i}}{\ell \zeta^{i(\ell-1)}} \left(x^{(\ell-1)s} + \zeta^i x^{(\ell-2)s} +\cdots + \zeta^{i(\ell-2)} x^s + \zeta^{i(\ell-1)} \right) \\
&=& \frac{A_0 x^{r_0}}{\ell} \left(x^{(\ell-1)s} +  x^{(\ell-2)s} +\cdots +  x^s + 1 \right) \\
& & + \frac{A_1 x^{r_1}}{\ell} \left( \sum_{i=1}^{\ell-1}  \zeta^{-i(\ell-1)} x^{(\ell-1)s} + \sum_{i=1}^{\ell-1}  \zeta^{-i(\ell-2)} x^{(\ell-2)s} +\cdots +  \sum_{i=1}^{\ell-1} \zeta^{-i} x^s + \ell -1 \right)\\
&=& \frac{A_0 x^{r_0}}{\ell} \left(x^{(\ell-1)s} +  x^{(\ell-2)s} +\cdots +  x^s + 1 \right) -  \frac{A_1 x^{r_1}}{\ell}  \left(x^{(\ell-1)s} + \cdots +  x^{s} + \ell -1 \right), 
\end{eqnarray*}
where the last equality holds because $\sum_{i=1}^{\ell-1}  \zeta^{-i(\ell-j)} = -1$ for all $j=1, \ldots, \ell-1$ when $\ell$ is prime. By Theorem~\ref{construction} and let $r_1=\cdots = r_{\ell-1}$ and $A_1 = \cdots = A_{\ell-1}$,  $P(x)$ is a PP of $\fq$ if and only if $(r_0, s) = (r_1, s) =1$ and $\{A_0^s, A_1^s \zeta^{r_1}, \ldots, A_1^s \zeta^{(\ell-1) r_1} \} = \mu_{\ell}$. The latter condition is equivalent to $A_0^s \neq A_1^s \zeta^{i r_1}$ for all $i=1, \ldots, \ell-1$, namely,  $A_1^s = A_0^s$.
\end{proof}
Taking $A_0 = A_1 =1$, we obtain the following PP with coefficients in the prime field $\mathbb{F}_p$. 

\begin{corol}
Let $q=p^m$, $\ell$ be a prime factor of $q-1$ with $s= \frac{q-1}{\ell}$.  Then
$f(x) = x^{r_0} \left(x^{(\ell-1) s} + \cdots +  x^{s} +1 \right)$ $-
 x^{r_1} \left(x^{(\ell-1)s} + \cdots +  x^{s} + \ell-1 \right)$ is a PP of $\fq$ if and only if $(r_0, s) = (r_1, s) =1$. 
\end{corol}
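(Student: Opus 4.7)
The plan is to derive this as an immediate specialization of the preceding theorem. Setting $A_0 = A_1 = 1$ in that theorem, we have $A_0, A_1 \in \fq^*$, so the hypothesis is satisfied, and $f(x)$ becomes exactly the polynomial displayed in the corollary.

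Next, I would verify that the two conditions from the theorem collapse to a single one. The theorem requires $(r_0, s) = (r_1, s) = 1$ and $A_0^s = A_1^s$. With the choice $A_0 = A_1 = 1$, we have $A_0^s = 1 = A_1^s$ automatically, so this condition is vacuous. Hence the PP characterization reduces precisely to $(r_0, s) = (r_1, s) = 1$, which is exactly the statement of the corollary.

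Since the entire content is a substitution into an already-proved theorem, there is no real obstacle — the only thing worth writing down is the observation that $1^s = 1$ makes the coefficient condition trivial. I would therefore present the proof in a single short paragraph invoking the preceding theorem.
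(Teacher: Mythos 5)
Your proposal is correct and matches the paper's approach exactly: the paper simply notes ``Taking $A_0 = A_1 = 1$'' before stating the corollary, and the whole content is that $1^s = 1$ makes the condition $A_0^s = A_1^s$ automatic, leaving only $(r_0,s)=(r_1,s)=1$. Nothing further is needed.
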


Finally we give another application of Theorem~\ref{construction}, which generalizes Theorem~11 in \cite{ZhaHu}.

\begin{theorem} \label{squaredivisor}
Assume  $p^n \equiv 1 \pmod{\ell^2}$ and let $\theta$ be an element of $\fpn$ of order $\ell$. Then 
\[
f(x) = \sum_{i=1}^t x^{p^i}  \prod_{j=1, j\neq i}^{t} (x^{(p^n-1)/\ell} - \theta^j)
\]
is a PP over $\fpn$ if and only if  $\{ip^i \pmod{\ell} \mid i=0, \ldots, \ell-1\} = \mathbb{Z}_{\ell}$. 
\end{theorem}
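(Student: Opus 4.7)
The plan is to recognize $f$ as a cyclotomic mapping polynomial in disguise and then apply Theorem~\ref{main2}. First, set $s = (p^n-1)/\ell$. Since $\theta$ has order $\ell$ it is a primitive $\ell$-th root of unity, so by re-choosing the primitive element $\gamma$ if necessary I may assume $\zeta = \theta$, which gives $x^s = \theta^k$ for every $x \in C_k$.

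Next, I will evaluate $f$ on each coset. The factor $\prod_{j \neq i}(x^s - \theta^j)$ acts as a Lagrange-type selector: on $C_k$ it becomes $\prod_{j \neq i}(\theta^k - \theta^j)$, which vanishes unless $i = k$, and for $i = k$ equals $\theta^{k(\ell-1)}\prod_{m=1}^{\ell-1}(1-\theta^m) = \ell\theta^{-k}$, using the standard identity $\prod_{m=1}^{\ell-1}(1-\theta^m) = \ell$ obtained by evaluating $X^{\ell-1}+\cdots+1 = \prod_m(X-\theta^m)$ at $X = 1$. Hence the restriction of $f$ to $C_k$ is $\ell\theta^{-k} x^{p^k}$, so $f$ is exactly the cyclotomic mapping polynomial of Theorem~\ref{main2} with $A_k = \ell\theta^{-k}$ and $r_k = p^k$.

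To finish I will verify the hypotheses of Theorem~\ref{main2} and translate the criterion. The coprimality $(r_k, s) = (p^k, s) = 1$ is automatic since $\gcd(p, p^n - 1) = 1$. The assumption $p^n \equiv 1 \pmod{\ell^2}$ enters decisively here: it forces $\ell \mid s$, so $\theta^s = 1$ and $A_k^s = \ell^s \theta^{-ks} = \ell^s$ is independent of $k$; note $\ell^s \in \mu_\ell$ since $(\ell^s)^\ell = \ell^{p^n-1} = 1$ by Fermat's little theorem. By Theorem~\ref{main2}, $f$ is a PP of $\fpn$ if and only if $\{A_k^s\,\theta^{k r_k} : 0 \le k \le \ell-1\} = \mu_\ell$; since $A_k^s$ is a fixed element of $\mu_\ell$, multiplying through by $(\ell^s)^{-1}$ reduces this to $\{\theta^{k p^k} : 0 \le k \le \ell-1\} = \mu_\ell$, i.e.\ $\{k p^k \bmod \ell : 0 \le k \le \ell-1\} = \mathbb{Z}_\ell$.

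The main obstacle is purely bookkeeping: once one spots the Lagrange-interpolation shape of the defining sum, the rest is mechanical. The nontrivial conceptual input beyond Theorem~\ref{main2} is the hypothesis $\ell^2 \mid p^n - 1$, which is precisely what collapses $A_k^s$ to a single constant and thereby reduces the PP criterion to the stated combinatorial condition on the residues $k p^k \pmod{\ell}$.
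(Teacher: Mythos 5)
Your proof is correct and follows essentially the same route as the paper: identify $f$ as the cyclotomic mapping with $r_i=p^i$ and $A_i=\ell\theta^{-i}$ (the paper writes this as $\ell\theta^{i(\ell-1)}$, which is the same element), apply the permutation criterion, and use $\ell^2\mid p^n-1$ to force $\theta^{s}=1$ so that $A_i^s=\ell^s$ is constant and the condition collapses to $\{ip^i\bmod\ell\}=\mathbb{Z}_\ell$. The only differences are cosmetic: you verify the Lagrange-selector vanishing and the membership $\ell^s\in\mu_\ell$ explicitly, and you invoke Theorem~\ref{main2} where the paper cites the equivalent Theorem~\ref{construction}.
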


\begin{proof} Let $s = \frac{p^n-1}{\ell}$.   We note that $f$ is a cyclotomic mapping with
$r_i = p^i$ for $i=0, \ldots, \ell-1$ and  $ A_i = \prod_{j=1, j\neq i}^{t} (\theta^{i} - \theta^j) = \theta^{i(\ell-1)} (1-\theta^{-1})\cdots (1-\theta^{-(\ell-1)}) = \ell \theta^{i(\ell-1)} $.  By Theorem~\ref{construction}, $f$ is a PP of $\fq$ if and only if $(p^i, q-1)=1$ for all $i=0, \ldots, \ell-1$ and  $\{ \ell^s \theta^{i(\ell-1)s}  \theta^{ip^i} \mid i=0, \ldots, \ell-1\} = \mu_{\ell}$.  The condition $p^n \equiv 1 \pmod{\ell^2}$ implies $\ell \mid s$ and thus $\theta^{i(\ell-1)s} =1$. Hence $\{ \ell^s  \theta^{ip^i} \mid i=0, \ldots, \ell-1\} = \mu_{\ell}$ if and only if $\{ip^i \pmod{\ell} \mid i=0, \ldots, \ell-1\} = \mathbb{Z}_{\ell}$. 
\end{proof}

\begin{corol}[Theorem 11, \cite{ZhaHu}]\label{ZhaHuTheorem11}
Assume $p \equiv 1 \pmod{\ell}$ and $p^n \equiv 1 \pmod{\ell^2}$ and let $\theta$ be an element of $\fpn$ of order $\ell$. Then 
\[
f(x) = \sum_{i=1}^t x^{p^i}  \prod_{j=1, j\neq i}^{t} (x^{(p^n-1)/\ell} - \theta^j)
\]
is a PP over $\fpn$
\end{corol}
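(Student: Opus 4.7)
The plan is to invoke Theorem~\ref{squaredivisor} directly, since Corollary~\ref{ZhaHuTheorem11} has the identical functional form and only adds the extra hypothesis $p \equiv 1 \pmod{\ell}$. Under the joint assumptions $p \equiv 1 \pmod{\ell}$ and $p^n \equiv 1 \pmod{\ell^2}$, it suffices to verify the combinatorial condition
\[
\{ i p^i \pmod{\ell} \mid i=0, 1, \ldots, \ell-1 \} = \mathbb{Z}_{\ell}
\]
that appears in the conclusion of Theorem~\ref{squaredivisor}.

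The verification is short: from $p \equiv 1 \pmod{\ell}$ one gets $p^i \equiv 1 \pmod{\ell}$ for every $i \geq 0$, so that $i p^i \equiv i \pmod{\ell}$. Hence the set $\{ i p^i \pmod \ell \}_{i=0}^{\ell-1}$ coincides with $\{0, 1, \ldots, \ell-1\} = \mathbb{Z}_{\ell}$, and Theorem~\ref{squaredivisor} applies to give that $f$ permutes $\fpn$.

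There is essentially no obstacle here; the only subtlety is bookkeeping, namely confirming that we are in the setting where Theorem~\ref{squaredivisor} applies. The hypothesis $p^n \equiv 1 \pmod{\ell^2}$ is what allows the $\theta^{i(\ell-1)s}$ factors in the proof of Theorem~\ref{squaredivisor} to collapse to $1$ (since $\ell \mid s = (p^n-1)/\ell$), and the hypothesis $p \equiv 1 \pmod{\ell}$ is exactly what trivializes the residues $i p^i$. Thus the corollary is an immediate specialization of the stronger iff criterion established above.
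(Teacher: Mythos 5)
Your proposal is correct and matches the paper's intent exactly: the paper states this corollary without a separate proof, treating it as an immediate specialization of Theorem~\ref{squaredivisor}, and your verification that $p \equiv 1 \pmod{\ell}$ forces $ip^i \equiv i \pmod{\ell}$, hence $\{ip^i \bmod \ell\} = \mathbb{Z}_{\ell}$, is precisely the intended argument.
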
 

\begin{corol}
Assume  $p^n \equiv 1 \pmod{16}$ and let $\theta$ be an element of $\fpn$ of order $\ell$. Then 
\[
f(x) = \sum_{i=1}^t x^{p^i}  \prod_{j=1, j\neq i}^{t} (x^{(p^n-1)/\ell} - \theta^j)
\]
is a PP over $\fpn$. 
\end{corol}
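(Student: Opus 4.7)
The plan is to specialize Theorem~\ref{squaredivisor} to $\ell = 4$, since the hypothesis $p^n \equiv 1 \pmod{16}$ is precisely the instance $p^n \equiv 1 \pmod{\ell^2}$ for $\ell = 4$. Under this reduction, Theorem~\ref{squaredivisor} pins the entire content of the corollary down to verifying the single combinatorial condition
\[
\{\, i p^i \pmod 4 \mid i = 0, 1, 2, 3 \,\} = \mathbb{Z}_4,
\]
that is, that the set $\{0,\, p,\, 2p^2,\, 3p^3\}$ taken modulo $4$ equals $\{0,1,2,3\}$.

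To establish this, I would first observe that $p^n \equiv 1 \pmod{16}$ forces $p$ to be coprime to $16$, hence odd, so $p \equiv 1$ or $p \equiv 3 \pmod 4$. I would then run a short case analysis on $p \pmod 4$. When $p \equiv 1 \pmod 4$, every power $p^i$ reduces to $1 \pmod 4$, and the set is manifestly $\{0,1,2,3\}$. When $p \equiv 3 \pmod 4$, I would use $p^2 \equiv 1$ and $p^3 \equiv 3 \pmod 4$ to compute the set as $\{0,\, 3,\, 2,\, 9\} \equiv \{0,3,2,1\} \pmod 4$. In either case the set is $\mathbb{Z}_4$, and Theorem~\ref{squaredivisor} then delivers the conclusion.

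The argument presents no genuine obstacle; the case analysis is routine once the specialization $\ell = 4$ is made. The conceptual point worth highlighting is that for $\ell = 4$ the combinatorial condition of Theorem~\ref{squaredivisor} is satisfied for \emph{every} odd prime $p$, so the extra hypothesis $p \equiv 1 \pmod \ell$ appearing in Corollary~\ref{ZhaHuTheorem11} becomes redundant in this case. That redundancy is precisely the reason the corollary deserves a separate statement.
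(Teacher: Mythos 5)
Your proposal is correct and follows essentially the same route as the paper: specialize Theorem~\ref{squaredivisor} to $\ell=4$ and check that $\{0,\,p,\,2p^2,\,3p^3\}$ is a complete residue system modulo $4$ by cases on $p \bmod 4$. The only cosmetic difference is that the paper handles the case $p\equiv 1 \pmod 4$ by citing Corollary~\ref{ZhaHuTheorem11} rather than verifying the condition directly, which changes nothing of substance.
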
 
\begin{proof}
Obviously, $p$ must be odd. If $p \equiv 1 \pmod{4}$, then $f$ is PP over $\fpn$ by Corollary~\ref{ZhaHuTheorem11}. If $p \equiv 3 \pmod{4}$, then $\{ip^i \mid i=0, 1, 2, 3\} = \{0, p, 2p^2, 3p^3\}$ is indeed a complete set of residue modulo $4$. By Theorem~\ref{squaredivisor}, $f$ is a PP over $\fpn$. 
\end{proof}

Similarly, we obtain the following corollary.
\begin{corol}
Assume  $p^n \equiv 1 \pmod{25}$ and let $\theta$ be an element of $\fpn$ of order $\ell$. Then 
\[
f(x) = \sum_{i=1}^t x^{p^i}  \prod_{j=1, j\neq i}^{t} (x^{(p^n-1)/\ell} - \theta^j)
\]
is a PP over $\fpn$ if and only if $p \equiv 1 \pmod{5}$.  
\end{corol}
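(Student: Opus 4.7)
The plan is to reduce the statement to Theorem~\ref{squaredivisor} with $\ell = 5$, and then finish by a short residue-class inspection in $\mathbb{Z}_5$. The hypothesis $p^n \equiv 1 \pmod{25}$ is exactly $p^n \equiv 1 \pmod{\ell^2}$ for $\ell = 5$, and it also forces $\gcd(p, 5) = 1$. So Theorem~\ref{squaredivisor} applies directly and reduces the question ``is $f$ a PP of $\fpn$?'' to the purely combinatorial condition that $S := \{\, i p^i \bmod 5 : i = 0, 1, 2, 3, 4\,\}$ equals $\mathbb{Z}_5$.

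Having made that reduction, I would check the four possibilities $p \bmod 5 \in \{1, 2, 3, 4\}$ one at a time. If $p \equiv 1$, then $i p^i \equiv i$, so $S = \{0, 1, 2, 3, 4\}$ trivially. For the remaining three cases I use $p^4 \equiv 1 \pmod 5$ (Fermat) together with the first few powers of $p$ to show that two of the five values $i p^i$ coincide modulo $5$. For instance, $p \equiv 4$ yields the list $(i p^i) \equiv (0, 4, 2, 2, 4)$, while $p \equiv 2$ gives $(0, 2, 3, 4, 4)$ and $p \equiv 3$ gives $(0, 3, 3, 1, 4)$; each of these is missing at least one residue modulo $5$, so in none of these cases is $S = \mathbb{Z}_5$.

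Combining the two directions of the equivalence in Theorem~\ref{squaredivisor} with the case analysis above yields the stated characterization: $f$ is a PP over $\fpn$ if and only if $p \equiv 1 \pmod 5$. There is no real obstacle in this argument; the only small points requiring care are to remember that the equivalence in Theorem~\ref{squaredivisor} is sharp (so the three negative cases genuinely have to be verified, not just the trivial case $p \equiv 1$), and to observe that $p \equiv 0 \pmod 5$ is automatically ruled out by $p^n \equiv 1 \pmod{25}$.
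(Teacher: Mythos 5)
Your proposal is correct and takes essentially the same approach as the paper: the paper derives this corollary from Theorem~\ref{squaredivisor} with $\ell=5$ (saying only ``similarly,'' in parallel with the preceding $p^n\equiv 1\pmod{16}$ case), and your residue computations $(0,2,3,4,4)$, $(0,3,3,1,4)$, $(0,4,2,2,4)$ for $p\equiv 2,3,4\pmod 5$ correctly supply the case check that the paper leaves implicit.
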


\section{Realization of constants by polynomials}

In this section, we give more applications of Theorem~\ref{main} (or another version as in Theorem~\ref{main2}) to construct many new classes of PPs which have large indices and simple descriptions.  We  mainly consider how to choose constants $A_0, \ldots, A_{\ell-1}$ in terms of polynomials of specific formats in the cyclotomic mappings constructions. This demonstrate that our results generalize these results in \cite{AAW:08, AW:06, AW:07, Zieve-2}.

First of all, another way to rewrite  Theorem~\ref{main} is as follow:
\begin{theorem}\label{main2}
Let $q-1 = \ell s$,  $f_0(x), \ldots, f_{\ell-1} (x) \in \fq[x]$  and 
\[
P(x)  = 
\left\{
\begin{array}{ll} 
0, &   if ~ x=0; \\
x^{r_0} f_0(x^s), &  if~ x \in C_0; \\
x^{r_1} f_1(x^s),  &  if~ x \in C_1; \\
\vdots & \vdots \\
x^{r_{\ell-1}}  f_{\ell -1} (x^s), & if ~ x \in C_{\ell}.
\end{array}
\right.
\]
Then $P(x)$ is a PP of $\fq$ if and only if $(r_i, s) =1$ for any $i=0, 1, \ldots, \ell-1$ and  $\mu_{\ell} = \{\zeta^{r_i i}f_i(\zeta^i)^s  \mid i=0, \ldots, \ell-1\}$, where $\mu_{\ell}$ is the set of all $\ell$-th roots of unity. 
\end{theorem}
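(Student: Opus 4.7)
The plan is to reduce Theorem~\ref{main2} directly to Theorem~\ref{main} by observing that on each cyclotomic coset the factor $f_i(x^s)$ collapses to a constant. Specifically, for $x \in C_i$ we have $x = \gamma^{\ell j + i}$ for some $0 \le j \le s-1$, so $x^s = \gamma^{s(\ell j + i)} = \gamma^{si} = \zeta^i$ (using $\gamma^{s\ell} = \gamma^{q-1} = 1$). Hence on $C_i$ the polynomial $P(x)$ evaluates to $x^{r_i} f_i(\zeta^i)$, i.e.\ $P$ agrees pointwise on $\fq$ with the cyclotomic mapping $f^{x^{r_0},\ldots,x^{r_{\ell-1}}}_{A_0,\ldots,A_{\ell-1}}$ where $A_i := f_i(\zeta^i)$.

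With this identification, I would simply invoke the equivalence between (a) and (e) in Theorem~\ref{main}. That theorem requires $A_i \in \fq^*$, so I would briefly note that any $A_i = 0$ would force $P$ to vanish on the $s$-element set $C_i$, contradicting injectivity when $s > 1$; equivalently, $A_i = 0$ gives $A_i^s \zeta^{i r_i} = 0 \notin \mu_\ell$, so the roots-of-unity condition automatically guarantees all $A_i \neq 0$. Once every $A_i$ is nonzero, Theorem~\ref{main}(a)$\Leftrightarrow$(e) gives exactly
\[
P \text{ is a PP} \iff (r_i,s)=1 \text{ for all } i \text{ and } \{A_i^s\zeta^{i r_i} : 0 \le i \le \ell-1\} = \mu_\ell,
\]
and substituting $A_i = f_i(\zeta^i)$ produces the stated criterion $\mu_\ell = \{\zeta^{r_i i} f_i(\zeta^i)^s \mid i=0,\ldots,\ell-1\}$.

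There is essentially no obstacle here; the entire content is packaged in Theorem~\ref{main}, and the only verification needed is the elementary identity $x^s = \zeta^i$ on $C_i$ to replace the polynomial $f_i(x^s)$ by the scalar $f_i(\zeta^i)$. I would write the proof in two or three sentences: establish the pointwise equality $P(x) = f_i(\zeta^i) x^{r_i}$ for $x \in C_i$, then cite Theorem~\ref{main} to conclude.
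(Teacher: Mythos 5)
Your proposal is correct and matches the paper's intent exactly: the paper presents Theorem~\ref{main2} as ``another way to rewrite Theorem~\ref{main}'' and gives no separate proof, the implicit argument being precisely your observation that $x^s=\zeta^i$ on $C_i$, so $P$ coincides with the cyclotomic mapping with constants $A_i=f_i(\zeta^i)$ and the equivalence (a)$\Leftrightarrow$(e) of Theorem~\ref{main} applies. Your brief treatment of the degenerate case $f_i(\zeta^i)=0$ (which the paper leaves tacit) is a welcome addition; note only that injectivity already fails for any $s\ge 1$ in that case, since the elements of $C_i$ and $0$ all map to $0$.
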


Using  Equation~(\ref{correspondence}) and Theorem~\ref{main2}, we can construct many PPs in the following polynomial format with large indices.
 
\begin{equation}\label{correspondence2}
P(x) = \sum_{i=0}^{\ell-1} \frac{x^{r_i} f_i(x^s)}{\ell \zeta^{i(\ell-1)}} \left(x^{(\ell-1)s} + \zeta^i x^{(\ell-2)s} +\cdots + \zeta^{i(\ell-2)} x^s + \zeta^{i(\ell-1)} \right).
\end{equation}

As long as $f_i(\zeta^i) \neq 0$, we can  rewrite Theorem~\ref{main2} as follow:

\begin{theorem}\label{specialMain}
Let  $q-1=\ell s$, $f_1(x), \ldots, f_{\ell-1}(x) \in \fq[x]$, and 
\[
P(x)  = 
\left\{
\begin{array}{ll} 
0, &   if ~ x=0; \\
x^{r_0} f_0(x^s), &  if~ x \in C_0; \\
x^{r_1} f_1(x^s),  &  if~ x \in C_1; \\
\vdots & \vdots \\
x^{r_{\ell-1}}  f_{\ell -1} (x^s), & if ~ x \in C_{\ell}.
\end{array}
\right.
\]
Suppose $f_i(\zeta^i)^s = A \zeta^{n_i}$ for each $i=0, \ldots, \ell-1$ and a nonzero constant $A \in \fq^*$. 
Then $P(x)$
is a PP of $\fq$ if and only if
\begin{enumerate}
\item[{\rm (i)}] $(r_i, s) =1$ for any $i=0, \ldots, \ell-1$.
\item[{\rm (ii)}] $\{ i r_i + n_i \mid i =0, \ldots, \ell-1\}$ is a complete set of residues modulo $\ell$. 
\end{enumerate}
In particular, if $f_i(\zeta^{i})^s = A$ for each $i=0,
\ldots, \ell-1$ and a nonzero constant $A\in \fq^*$, then 
$P$ is a permutation polynomial
of $\fq$ if and only if $(r_i, s) =1$ for  $i =0, \ldots, \ell -1$ and  $\{r_i i  \mid i =0, \ldots, \ell -1\}$  is a complete set of residues modulo $\ell$.
\end{theorem}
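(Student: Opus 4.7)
The plan is to derive Theorem~\ref{specialMain} directly as a reformulation of Theorem~\ref{main2}. Recall that Theorem~\ref{main2} already supplies the full characterization: $P(x)$ is a PP of $\fq$ if and only if $(r_i,s)=1$ for every $i$ and $\mu_\ell = \{\zeta^{r_i i}\, f_i(\zeta^i)^s\mid i=0,\ldots,\ell-1\}$. Condition (i) of the present statement matches the first half of that criterion verbatim, so the entire task reduces to rewriting the set-equality condition under the extra hypothesis $f_i(\zeta^i)^s = A\zeta^{n_i}$.

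My first step would be to substitute the hypothesis into the set from Theorem~\ref{main2}, turning it into $\{A\zeta^{i r_i + n_i}\mid i=0,\ldots,\ell-1\}$. The key observation I would then record is that the common scalar $A$ must itself lie in $\mu_\ell$: each $f_i(\zeta^i)^s$ is an $s$-th power of a nonzero element of $\fq$ (we may assume $f_i(\zeta^i)\ne 0$, for otherwise $P$ would vanish identically on the nonempty coset $C_i$ and fail to be injective), so $f_i(\zeta^i)^s \in \mu_\ell$, and hence $A\zeta^{n_i}\in\mu_\ell$ forces $A\in\mu_\ell$.

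Once $A\in\mu_\ell$ is established, multiplication by $A$ is a bijection of $\mu_\ell$ onto itself, so $\{A\zeta^{i r_i + n_i}\} = \mu_\ell$ if and only if $\{\zeta^{i r_i + n_i}\} = \mu_\ell$, and the latter is exactly the statement that $\{i r_i + n_i \pmod \ell \mid i=0,\ldots,\ell-1\}$ is a complete set of residues modulo $\ell$. This gives condition (ii) and completes the equivalence. The ``in particular'' clause is then the specialization $n_i = 0$ for all $i$, for which (ii) collapses to $\{i r_i \bmod \ell\}$ being a complete residue system. I do not anticipate any genuine obstacle: the argument is pure bookkeeping on top of Theorem~\ref{main2}, with the only subtlety being the brief verification that $A$ is an $\ell$-th root of unity.
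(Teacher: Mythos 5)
Your proposal is correct and follows essentially the same route as the paper: both reduce the statement to Theorem~\ref{main2} and then observe that the set condition becomes $\{A\zeta^{ir_i+n_i}\}=\mu_\ell$. The only cosmetic difference is how the constant is discharged --- the paper simply says one may assume $A=1$ without loss of generality, while you justify this by noting $A\in\mu_\ell$ (which indeed follows since each $f_i(\zeta^i)^s$ is a nonzero $s$-th power and hence an $\ell$-th root of unity) so that multiplication by $A$ permutes $\mu_\ell$; this is a slightly more explicit version of the same step.
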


\begin{proof}{
Because the constant $A$ appears in each branch of the definition of $P(x)$, 
we can assume $A=1$ without loss of generality.   
From Theorem~\ref{main2}, we need to show that condition (ii) is
equivalent to   that 
$\mu_{\ell} = \{\zeta^{r_i i}f_i(\zeta^i)^s = \zeta^{ir_i + n_i}\mid i=0, \ldots, \ell-1\}$, which is obvious. 
}\end{proof}

We note that if $r_0 = r_1 = \cdots = r_{\ell-1} :=r$, then $\{r_i i  \mid i =0, \ldots, \ell -1\}$  is a complete set of residues modulo $\ell$ if and only if 
$(r, \ell) =1$. 
 If $r_0 = \cdots = r_{\ell-1}$ and $n_0 = \cdots = n_{\ell-1}$, we obtain Theorem 4.1 in \cite{AW:07} as a corollary.   As a
special case of Theorem~\ref{specialMain},  we also have the following
result. 

\begin{corol} \label{rogersBranches}
Let $q-1 = \ell s $,  $g_1(x), \ldots, g_{\ell-1}(x)$ be any $\ell$ polynomials over $\fq$, and 
\[
P(x)  = 
\left\{
\begin{array}{ll} 
0, &   if ~ x=0; \\
x^{r_0} g_0(x^s)^\ell, &  if~ x \in C_0; \\
x^{r_1} g_1(x^s)^\ell,  &  if~ x \in C_1; \\
\vdots & \vdots \\
x^{r_{\ell-1}}  g_{\ell -1} (x^s)^\ell, & if ~ x \in C_{\ell}.
\end{array}
\right.
\]
Then
$P(x)$ is a permutation polynomial of $\fq$ if and
only if $\{r_i i  \mid i =0, \ldots, \ell -1\}$  is a complete set of residues modulo $\ell$, $(r_i, s) =1$ and $g_i(\zeta^i) \neq 0$ for all $0\leq i \leq
\ell-1$.
\end{corol}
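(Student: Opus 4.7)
The plan is to deduce this corollary directly from Theorem~\ref{specialMain} by specializing $f_i(x) := g_i(x)^{\ell}$ and observing that taking $\ell$-th powers collapses all the $n_i$ to zero. Concretely, for each $i$ one has
\[
f_i(\zeta^i)^{s} \;=\; \bigl(g_i(\zeta^i)^{\ell}\bigr)^{s} \;=\; g_i(\zeta^i)^{\ell s} \;=\; g_i(\zeta^i)^{q-1},
\]
which equals $1$ whenever $g_i(\zeta^i)\neq 0$. Thus, assuming $g_i(\zeta^i)\neq 0$ for every $i$, the hypothesis of Theorem~\ref{specialMain} is met with $A=1$ and $n_0=n_1=\cdots=n_{\ell-1}=0$, and the criterion there reduces to: $(r_i,s)=1$ for all $i$, and $\{\,i r_i \bmod \ell \mid i=0,\dots,\ell-1\,\}$ is a complete set of residues modulo $\ell$.

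For the converse direction I must also argue the necessity of the condition $g_i(\zeta^i)\neq 0$. The key observation is that for any $x\in C_i$ we have $x^{s}=\zeta^{i}$, so $g_i(x^{s})^{\ell}=g_i(\zeta^{i})^{\ell}$ is constant on $C_i$. Hence if $g_i(\zeta^i)=0$ for some $i$, then the $i$-th branch sends every element of the nonempty coset $C_i$ to $0$, which already equals $P(0)$; this kills injectivity and $P$ cannot be a PP. Once this is ruled out, the equivalence from Theorem~\ref{specialMain} gives the remaining two conditions, finishing the proof.

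The argument is essentially a one-line specialization once Theorem~\ref{specialMain} is in hand, so there is no real obstacle; the only subtlety worth spelling out is the reduction $g_i(\zeta^i)^{\ell s}=g_i(\zeta^i)^{q-1}=1$ (using $g_i(\zeta^i)\in\fq^{*}$) and the short injectivity argument that forces $g_i(\zeta^i)\neq 0$ in the necessary direction. Both are short enough to present in a single short paragraph in the final write-up.
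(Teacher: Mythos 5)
Your proposal is correct and follows exactly the paper's route: set $f_i(x)=g_i(x)^{\ell}$, note $f_i(\zeta^i)^{s}=g_i(\zeta^i)^{q-1}=1$, and invoke Theorem~\ref{specialMain}. The extra remark on the necessity of $g_i(\zeta^i)\neq 0$ (the branch would collapse $C_i$ to $0=P(0)$) is a worthwhile detail that the paper leaves implicit.
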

\begin{proof}
{This is true since if we set $f_i(x)= g_i(x)^\ell$, then we have
$f_i(\zeta^i)^s=g_i(\zeta^i)^{\ell s}=g_i(\zeta^i)^{q-1} =1$. The result
follows from Theorem \ref{specialMain}.}
\end{proof}

We note that earlier results of Wan and Lidl (see Corollary 1.4 in \cite{WL:91}), 
and Akbary and Wang (Theorem 3.1 in \cite{AW:07}) are also special cases of the above result.

We next construct cyclotomic permutations using classes of PPs with coefficients in some appropriate subfield which has been studied in \cite{AAW:08}, \cite{AW:06}, \cite{AW:07},  \cite{Chapuy:07},  and \cite{Zieve-2}. 

\begin{theorem} \label{specialR}
Let $\ell, r_0, \ldots, r_{\ell-1}$ be a positive integer with $q-1 = \ell s$. 
Suppose $q  = q_0^m$ where $q_0 \equiv 1 \pmod{\ell}$ and $\ell \mid m$.
Let $f_1(x), \ldots, f_{\ell-1}(x)$ be  polynomials in
$\mathbb{F}_{q_0}[x]$ 
and 
\[
P(x)  = 
\left\{
\begin{array}{ll} 
0, &   if ~ x=0; \\
x^{r_0} f_0(x^s), &  if~ x \in C_0; \\
x^{r_1} f_1(x^s),  &  if~ x \in C_1; \\
\vdots & \vdots \\
x^{r_{\ell-1}}  f_{\ell -1} (x^s), & if ~ x \in C_{\ell}.
\end{array}
\right.
\]
Then the polynomial $P(x)$ is
a permutation polynomial of $\F_q$ if and only if $\{r_i i  \mid i =0, \ldots, \ell -1\}$  is a complete set of residues modulo $\ell$, $(r_i, s)=1$ and $f_i(\zeta^i) \neq 0$ for all $0\leq i \leq \ell-1$.
\end{theorem}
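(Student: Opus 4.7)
The plan is to reduce this statement to Theorem~\ref{specialMain} by showing that under the hypotheses the values $f_i(\zeta^i)^s$ all collapse to $1$ (when nonzero), so that the $A\zeta^{n_i}$ appearing in Theorem~\ref{specialMain} are all equal to $1$ with every $n_i=0$. Once this is done, the criterion there reduces exactly to the purely combinatorial statement on $\{ir_i\bmod\ell\}$ together with $(r_i,s)=1$.

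First I would note that because $q_0\equiv 1\pmod{\ell}$, the subfield $\mathbb{F}_{q_0}$ contains a full set of $\ell$-th roots of unity, and in particular the fixed element $\zeta=\gamma^s$. Therefore each $\zeta^i$ lies in $\mathbb{F}_{q_0}$, and since the coefficients of $f_i$ also lie in $\mathbb{F}_{q_0}$, we obtain $f_i(\zeta^i)\in\mathbb{F}_{q_0}$. The heart of the proof is then the arithmetic claim that $(q_0-1)\mid s$. Writing $q_0-1=\ell t$ and expanding
\[
q_0^m-1=(q_0-1)(1+q_0+\cdots+q_0^{m-1})
\]
gives $s=t(1+q_0+\cdots+q_0^{m-1})$. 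Reducing modulo $q_0-1$ via $q_0\equiv 1$ yields $s\equiv tm\pmod{q_0-1}$, and since $\ell\mid m$ and $\ell t=q_0-1$, the product $tm$ is a multiple of $q_0-1$. Hence $s\equiv 0\pmod{q_0-1}$, and by Fermat's little theorem $a^s=1$ for every $a\in\mathbb{F}_{q_0}^*$. In particular $f_i(\zeta^i)^s=1$ whenever $f_i(\zeta^i)\neq 0$.

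Applying Theorem~\ref{specialMain} with $A=1$ and all $n_i=0$ now yields the stated necessary and sufficient condition. For the necessity of $f_i(\zeta^i)\neq 0$: if $f_i(\zeta^i)=0$ for some $i$, then every $x\in C_i$ satisfies $x^s=\zeta^i$, so $x^{r_i}f_i(x^s)=0$, forcing $P$ to collapse the whole coset $C_i$ to $0$ and destroying injectivity. The only real obstacle is the arithmetic lemma $(q_0-1)\mid s$, and it is essential that both $q_0\equiv 1\pmod{\ell}$ and $\ell\mid m$ are assumed — dropping either would let $f_i(\zeta^i)^s$ range over arbitrary $\ell$-th roots of unity and spoil the clean reduction to a residue condition on $\{ir_i\}$ alone.
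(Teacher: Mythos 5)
Your proof is correct, and it reaches the same reduction as the paper --- namely, apply Theorem~\ref{specialMain} after showing that $f_i(\zeta^i)^s=1$ whenever $f_i(\zeta^i)\neq 0$ --- but the mechanism for that key step differs. The paper writes $m=\ell n$, factors the exponent as $s=\frac{q_0^n-1}{\ell}\bigl(1+q_0^n+\cdots+q_0^{n(\ell-1)}\bigr)$, and collapses the resulting product $\prod_j f_i(\zeta^i)^{q_0^{nj}}$ to $f_i(\zeta^i)^\ell$ using Frobenius-invariance, obtaining $f_i(\zeta^i)^{q_0^n-1}=1$; in effect it proves $(q_0^n-1)\mid s$. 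You instead prove the weaker but sufficient divisibility $(q_0-1)\mid s$ directly, by reducing $1+q_0+\cdots+q_0^{m-1}\equiv m\pmod{q_0-1}$ and using $\ell t=q_0-1$ together with $\ell\mid m$; your computation is correct (indeed $s=(q_0-1)(m'+tk)$ in your notation) and is arguably more elementary, avoiding the norm-type product. Both arguments rest on the observation, which you make explicit and the paper leaves implicit, that $\zeta\in\mathbb{F}_{q_0}$ (since $\ell\mid q_0-1$) and hence $f_i(\zeta^i)\in\mathbb{F}_{q_0}$. You also add a short direct justification that $f_i(\zeta^i)=0$ destroys injectivity, which the paper omits but which is needed for the ``only if'' direction since Theorem~\ref{specialMain} presupposes the nonvanishing; this is a small but genuine improvement in completeness.
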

\begin{proof} {
Let  $m=\ell n$.  The result is clear from Theorem \ref{specialMain}, since  we have
\begin{eqnarray*}
f_i(\zeta^i)^{\frac{q-1}{\ell}}&=&f_i(\zeta^i)^{\frac{q_0^{\ell n}-1}{\ell}}\\
&=& f_i(\zeta^i)^{\frac{q_0^{n}-1}{\ell} \left((q_0^{n})^{\ell-1}+(q_0^{n})^{\ell-2}+\cdots+1  \right)}\\
&=& \left( \prod_{j=0}^{\ell-1} f_i(\zeta^i)^{q_0^{nj}} \right)^{\frac{q_0^{n}-1}{\ell}}\\
&=& \left( f_i(\zeta^i)^\ell \right)^{\frac{q_0^{n}-1}{\ell}}\\
&=& 1.
\end{eqnarray*}
}\end{proof}

For example, let  $v$ be the order of $p$ in $\mathbb{Z}/\ell\mathbb{Z}$. For any positive integer $n$, we can take $q=q_0^m = p^{\ell vn}$ in the above result. We note Theorem~\ref{specialR} generalizes Corollary 3.3 (\cite{AW:07} or Theorem 3.1 (\cite{Chapuy:07}) or Theorem~1.2 (\cite{Zieve-2}), which deal with the case $P(x) = x^rf(x^s)$.

Moreover, in \cite{AAW:08, AW:07, Zieve-2}, classes of  PPs of the form $x^r(1+x^{e_1 s} + \ldots +x^{e_k s})^t$ are studied. For $h(x) = 1 + x+ \cdots +x^k$, it is well known that $h(\zeta^0) = k+1 \neq 0$ if and only if $p \nmid (k+1)$ and $h(\zeta^i) = \frac{\zeta^{(k+1)i}-1}{\zeta^i -1} \neq 0$ if and only if $\ell \nmid (k+1)i$ for $i=1, \ldots, \ell-1$. Here we construct cyclotomic permutations from these classes which generalizes Theorem~5.2 (\cite{AAW:08}) and Corollary~2.3 (\cite{Zieve-2}). 

\begin{corol}\label{exampleOfCongruenceOne}
Let $\ell$ be positive integer with $ q- 1 = \ell s$. 
For all $i=0, \ldots, \ell -1$, let  $r_i, k_i, e_i, t_i$  be positive integers such that  $(\ell, e_i) =1$ and $h_{k_i}(x) = 1 + x + \cdots + x^{k_i}$. 
Suppose $q  = q_0^m \pmod{\ell}$ such that $q_0 \equiv 1 \pmod{\ell}$ and $\ell \mid m$.  Then 
\[
P(x)  = 
\left\{
\begin{array}{ll} 
0, &   if ~ x=0; \\
x^{r_0} h_{k_0}(x^{e_0 s})^{t_0}, &  if~ x \in C_0; \\
x^{r_1} h_{k_1}(x^{e_1 s})^{t_1},  &  if~ x \in C_1; \\
\vdots & \vdots \\
x^{r_{\ell-1}}  h_{k_{\ell -1}} (x^{e_{\ell-1} s})^{t_{\ell-1}}, & if ~ x \in C_{\ell},
\end{array}
\right.
\]
permutes $\fq$ if and only if $\{r_i i  \mid i =0, \ldots, \ell -1\}$  is a complete set of residues modulo $\ell$, $(r_i, s)=1$ for all $0\leq i \leq \ell-1$, $p \nmid k_0+1$, and $\ell \nmid i(k_i +1)$ for all $i=1, \ldots, \ell-1$. 
\end{corol}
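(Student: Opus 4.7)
The strategy is to recognize this as a direct application of Theorem~\ref{specialR}, with $f_i(x) := h_{k_i}(x^{e_i})^{t_i}$. Because $h_{k_i}(x) = 1 + x + \cdots + x^{k_i}$ has coefficients in the prime field $\mathbb{F}_p$, each $f_i$ lies in $\mathbb{F}_p[x] \subseteq \mathbb{F}_{q_0}[x]$, so the subfield hypothesis of Theorem~\ref{specialR} is automatic. Consequently, once we verify the nonvanishing clause $f_i(\zeta^i) \neq 0$ for each $i$, the remaining two conditions in Theorem~\ref{specialR} are exactly $\{r_i i \pmod{\ell}\}_{i=0}^{\ell-1} = \mathbb{Z}/\ell\mathbb{Z}$ and $(r_i, s) = 1$, which already appear in the statement.

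The main (and only nontrivial) step is to translate $f_i(\zeta^i) \neq 0$ into the divisibility conditions $p \nmid k_0 + 1$ and $\ell \nmid i(k_i + 1)$ for $1 \leq i \leq \ell - 1$. Since $t_i$ is a positive integer, $f_i(\zeta^i) \neq 0$ iff $h_{k_i}(\zeta^{i e_i}) \neq 0$. For $i = 0$ we have $h_{k_0}(1) = k_0 + 1 \in \mathbb{F}_p$, which is nonzero precisely when $p \nmid k_0 + 1$. For $1 \leq i \leq \ell - 1$, the hypothesis $(\ell, e_i) = 1$ forces $\zeta^{i e_i} \neq 1$ (since $\ell \nmid i e_i$ when $\ell \nmid i$), so we may use
\[
h_{k_i}(\zeta^{i e_i}) = \frac{\zeta^{i e_i (k_i + 1)} - 1}{\zeta^{i e_i} - 1}.
\]
The denominator is nonzero, so the quotient vanishes iff $\zeta^{i e_i (k_i + 1)} = 1$, i.e.\ iff $\ell \mid i e_i (k_i + 1)$. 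Using $(\ell, e_i) = 1$ once more, this reduces to $\ell \mid i(k_i + 1)$. Hence $h_{k_i}(\zeta^{i e_i}) \neq 0$ is equivalent to $\ell \nmid i(k_i + 1)$.

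Combining these two computations yields exactly the stated nonvanishing conditions, and Theorem~\ref{specialR} then gives the claimed equivalence. I do not expect any serious obstacle here; the only point to be mildly careful about is not to confuse the role of $e_i$ (which interacts with $\zeta^i$ via $\zeta^{i e_i}$) with the role of $k_i + 1$ (which enters through the numerator of the cyclotomic-style quotient), and to remember that for $i = 0$ the denominator trick fails and one must substitute $x = 1$ directly, producing the $p$-adic condition rather than an $\ell$-adic one.
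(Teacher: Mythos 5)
Your proposal is correct and matches the paper's (implicit) argument exactly: the paper gives no separate proof of this corollary, but the paragraph immediately preceding it records the same two facts you derive — $h_k(\zeta^0)=k+1\neq 0$ iff $p\nmid k+1$, and $h_k(\zeta^i)=\frac{\zeta^{(k+1)i}-1}{\zeta^i-1}\neq 0$ iff $\ell\nmid (k+1)i$ — and then invokes Theorem~\ref{specialR} with $f_i(x)=h_{k_i}(x^{e_i})^{t_i}\in\mathbb{F}_p[x]\subseteq\mathbb{F}_{q_0}[x]$. Your additional care in reducing $\ell\nmid ie_i(k_i+1)$ to $\ell\nmid i(k_i+1)$ via $(\ell,e_i)=1$ is a detail the paper glosses over but is handled correctly.
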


Now we construct several classes of PPs obtained from Theorem~\ref{specialMain} such that $f_i(\zeta^i)^s$ ($i=0, \ldots, \ell-1$) are not necessarily the same. Again, the following result extends Theorem 4.4 in \cite{AW:07} and Theorem ~1.3 in \cite{Zieve-2}.

\begin{theorem}\label{allOneBranchesCongruenceMinusOne}
Let $\ell$ be positive integer with $ q- 1 = \ell s$. 
For all $i=0, \ldots, \ell -1$, let  $r_i, k_i, e_i, t_i, n_i$  be positive integers such that  $(\ell, e_i) =1$. Put $h_{k^\prime_i}(x) = 1 + x + \cdots + x^{k^\prime_i}$ and $h_{k_i}(x) = 1 + x + \cdots + x^{k_i}$. Let $ \bar{k_i} = \ell/(\ell, k_i)$. 
Suppose $q  = q_0^m$ such that $q_0 \equiv -1 \pmod{\ell}$ and $m$ is even. 
Pick $\hat{h}_i \in \mathbb{F}_{q_0} [x]$  and let $f_i (x) := h_{k^\prime_i}(x)^{t_i} \hat{h}_i (h_{k_i}(x)^{\bar{k_i}})$. Then 
\[
P(x)  = 
\left\{
\begin{array}{ll} 
0, &   if ~ x=0; \\
x^{r_0} f_0(x^{e_0 s}), &  if~ x \in C_0; \\
x^{r_1} f_1(x^{e_1 s}),  &  if~ x \in C_1; \\
\vdots & \vdots \\
x^{r_{\ell-1}}  f_{\ell -1} (x^{e_{\ell-1} s}), & if ~ x \in C_{\ell},
\end{array}
\right.
\]
permutes $\fq$ if and only if $\{ (r_i+\frac{e_ik_i^\prime t_is}{2}) i  \mid i =0, \ldots, \ell -1\}$  is a complete set of residues modulo $\ell$, $(r_i, s)=1$ and 
$f_i(\zeta^i) \neq 0$ for all $0\leq i \leq \ell-1$. 
\end{theorem}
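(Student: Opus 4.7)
The plan is to apply Theorem~\ref{specialMain} after rewriting each branch $x^{r_i}f_i(x^{e_i s})$ as $x^{r_i}g_i(x^s)$ with $g_i(y):=f_i(y^{e_i})$. The conditions $(r_i,s)=1$ transfer directly, so the heart of the argument is to compute
\[
g_i(\zeta^i)^s \;=\; f_i(\zeta^{e_ii})^s \;=\; \bigl[h_{k_i'}(\zeta^{e_ii})^{t_i}\bigr]^s \cdot \bigl[\hat h_i\bigl(h_{k_i}(\zeta^{e_ii})^{\bar k_i}\bigr)\bigr]^s
\]
and to express it as $A\zeta^{n_i}$ with $A$ independent of $i$ and $n_i$ of the form required by the statement.

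The computation rests on one Frobenius identity. Because $q_0\equiv -1\pmod\ell$ one has $\zeta^{q_0}=\zeta^{-1}$, hence
\[
h_k(\zeta^j)^{q_0} \;=\; h_k(\zeta^{-j}) \;=\; \sum_{l=0}^k \zeta^{-lj} \;=\; \zeta^{-kj}\, h_k(\zeta^j),
\]
so $h_k(\zeta^j)^{q_0-1}=\zeta^{-kj}$. Taking $k=k_i$, $j=e_ii$ and raising to the $\bar k_i$-th power gives $h_{k_i}(\zeta^{e_ii})^{\bar k_i(q_0-1)}=\zeta^{-k_ie_ii\bar k_i}=1$, since $\ell\mid k_i\bar k_i$. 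Thus $h_{k_i}(\zeta^{e_ii})^{\bar k_i}\in\mathbb{F}_{q_0}^*$, and as $\hat h_i\in\mathbb{F}_{q_0}[x]$ the second factor above, call it $d_i$, also lies in $\mathbb{F}_{q_0}^*$.

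I now combine this with the crucial divisibility $(q_0-1)\mid s$. Write $q_0^m-1=(q_0-1)M$ with $M=1+q_0+\cdots+q_0^{m-1}$; since $m$ is even and $q_0\equiv -1\pmod\ell$, the alternating sum $M$ is divisible by $\ell$, so $s=(q_0^m-1)/\ell=(q_0-1)N$ where $N:=M/\ell\in\mathbb{Z}$. Consequently $d_i^s=1$, and using $c_i:=h_{k_i'}(\zeta^{e_ii})$ with $c_i^{q_0-1}=\zeta^{-k_i'e_ii}$ one obtains $c_i^{t_is}=\zeta^{-k_i'e_iit_iN}$. Therefore $f_i(\zeta^{e_ii})^s=\zeta^{n_i}$ with $n_i\equiv -k_i'e_iit_iN\pmod\ell$, and Theorem~\ref{specialMain} (applied with $A=1$) reduces the permutation problem to checking that $\{\,r_ii+n_i\,\}_{i=0}^{\ell-1}$ is a complete set of residues modulo $\ell$, together with $(r_i,s)=1$ and the nonvanishing $f_i(\zeta^{e_ii})\neq 0$.

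The remaining and most delicate step is the identification $n_i\equiv\tfrac{e_ik_i't_is}{2}\,i\pmod\ell$, which produces the coefficient $r_i+\tfrac{e_ik_i't_is}{2}$ in the statement. Writing $q_0=-1+\ell K$ and expanding $q_0^m-1$ and $M$ modulo $\ell^2$ by the binomial theorem yields $s\equiv -mK\pmod\ell$ and $N\equiv (m/2)K\pmod\ell$, so $-N\equiv s/2\pmod\ell$. For odd $\ell$ this division by $2$ is unproblematic; for even $\ell$ the hypothesis that $q_0$ is odd and $m$ is even forces $4\mid q_0^m-1$, so $s/2$ is itself an integer and the congruence still holds. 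Plugging $-N\equiv s/2$ into the formula for $n_i$ completes the proof of the criterion.
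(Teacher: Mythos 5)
Your proposal is correct and follows essentially the same route as the paper: the Frobenius identity $\zeta^{q_0}=\zeta^{-1}$ giving $h_k(\zeta^j)^{q_0-1}=\zeta^{-kj}$, the observation that $h_{k_i}(\zeta^{e_i i})^{\bar k_i}$ (hence the $\hat h_i$ factor) lies in $\mathbb{F}_{q_0}$ and is killed by the $s$-th power since $(q_0-1)\mid s$, and the reduction to Theorem~\ref{specialMain} via $f_i(\zeta^{e_i i})^s=\zeta^{e_ik_i't_i s i/2}$. Your explicit justification of $-N\equiv s/2\pmod{\ell}$ (which the paper passes over) is a welcome extra.
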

\begin{proof}
Let $m=2n$. We note that $\ell \mid q_0 +1$ implies that $\zeta^{q_0} = \zeta^{-1}$
and thus $h_{k_i^\prime}(\zeta^{ie_i})^{q_0-1} =$ 
$\left(   \frac{ \zeta^{(k_i^\prime +1) i e_i q_0} - 1}{\zeta^{i e_i q_0} -1} \right) 
\left (\frac{ \zeta^{i e_i} -1}{ \zeta^{(k_i^\prime +1) i e_i} - 1} \right)$ 
$= \zeta^{-k_i^\prime i e_i}$. Furthermore, $q_0 -1 \mid \frac{q_0^2-1}{q_0+1} \mid \frac{q-1}{q_0+1} \mid \frac{q-1}{\ell} =s$ implies that $h_{k_i^\prime}(\zeta^{ie_i})^{s} = \zeta^{-\frac{k_i^\prime i e_i s}{q_0-1}} = \zeta^{\frac{k_i^\prime i e_i s}{2}}$. 
Similarly, $h_{k_i}(\zeta^{ie_i})^{\bar{k_i} q_0}  = \left( \frac{h_{k_i}(\zeta^{ie_i})}{\zeta^{k_i i e_i}} \right)^{\bar{k_i}} = h_{k_i}(\zeta^{ie_i})^{\bar{k_i}}$ implies that $h_{k_i}(\zeta^{ie_i})^{\bar{k_i}} \in \mathbb{F}_{q_0}$.    
Then the result follows from Theorem \ref{specialMain}, since  we have
\begin{eqnarray*}
f_i(\zeta^{ie_i})^{\frac{q-1}{\ell}}&=& \left(h_{k^\prime_i}(\zeta^{ie_i})^{t_i} \right)^{\frac{q-1}{\ell}} \left( \hat{h}_i (h_{k_i}(\zeta^{ie_i})^{\bar{k_i}}) \right)^{\frac{q_0^{2n}-1}{\ell}}\\
&=& h_{k^\prime_i}(\zeta^{ie_i})^{t_i s} \left( \hat{h}_i (h_{k_i}(\zeta^{ie_i})^{\bar{k_i}} ) \right)^{\frac{q_0^{2}-1}{\ell} \left((q_0^{2})^{n-1}+(q_0^{2})^{n-2}+\cdots+1  \right)}\\
&=&  \zeta^{\frac{i e_i  k_i^\prime t_i s}{2}}\left( \prod_{j=0}^{n-1} \hat{h}_i (h_{k_i}(\zeta^{ie_i})^{\bar{k_i}} )^{q_0^{2j}} \right)^{\frac{q_0^{2}-1}{\ell}}\\
&=& \zeta^{\frac{i e_i  k_i^\prime t_i s}{2}},
\end{eqnarray*}
as long as $f_i(\zeta^{ie_i}) \neq 0$.
\end{proof}

Again, for $h(x) = 1 + x+ \cdots +x^k$, it is well known that $h(\zeta^0) = k+1 \neq 0$ if and only if $p \nmid (k+1)$ and  that  $h(\zeta^i) = \frac{\zeta^{(k+1)i}-1}{\zeta^i -1} \neq 0$ if and only if $\ell \nmid (k+1)i$ for $i=1, \ldots, \ell-1$.  We therefore obtain a generalization of Theorem 4.4 (\cite{AW:07}) and Corollary~2.4 (\cite{Zieve-2}) as follows:

\begin{corol}\label{exampleOfAllOneBranches}
Let $\ell$ be positive integer with $ q- 1 = \ell s$. 
For all $i=0, \ldots, \ell -1$, let  $r_i, k_i, e_i, t_i, n_i$  be positive integers such that  $(\ell, e_i) =1$. Put $h_{k_i}(x) = 1 + x + \cdots + x^{k_i}$.
Suppose $q  = q_0^m$ such that $q_0 \equiv -1 \pmod{\ell}$ and $m$ is even. 
Then 
\[
P(x)  = 
\left\{
\begin{array}{ll} 
0, &   if ~ x=0; \\
x^{r_0} h_{k_0}(x^{e_0 s})^{t_0}, &  if~ x \in C_0; \\
x^{r_1} h_{k_1}(x^{e_1 s})^{t_1},  &  if~ x \in C_1; \\
\vdots & \vdots \\
x^{r_{\ell-1}}  h_{k_{\ell -1}} (x^{e_{\ell-1} s})^{t_{\ell-1}}, & if ~ x \in C_{\ell},
\end{array}
\right.
\]
permutes $\fq$ if and only if $\{ (r_i+\frac{e_ik_it_is}{2}) i  \mid i =0, \ldots, \ell -1\}$  is a complete set of residue modulo $\ell$, $(r_i, s)=1$  for all $0\leq i \leq \ell-1$, $p \nmid k_0+1$, and $\ell \nmid i(k_i +1)$ for all $i=1, \ldots, \ell-1$. 
\end{corol}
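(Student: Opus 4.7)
The plan is to deduce this corollary as a direct specialization of Theorem~\ref{allOneBranchesCongruenceMinusOne}. In the notation of that theorem, I would choose the auxiliary data $k'_i := k_i$ and $\hat{h}_i(x) \equiv 1 \in \mathbb{F}_{q_0}[x]$ for each $i = 0, 1, \ldots, \ell-1$. With this choice, the general branch function
\[
f_i(x) = h_{k'_i}(x)^{t_i}\,\hat{h}_i\!\bigl(h_{k_i}(x)^{\bar{k_i}}\bigr)
\]
collapses to $f_i(x) = h_{k_i}(x)^{t_i}$, which is precisely the branch data of the corollary. All the other hypotheses of Theorem~\ref{allOneBranchesCongruenceMinusOne} --- $(\ell,e_i)=1$, $q=q_0^m$ with $q_0\equiv -1\pmod{\ell}$ and $m$ even --- are part of the corollary's hypotheses. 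Hence Theorem~\ref{allOneBranchesCongruenceMinusOne} applies verbatim, and I inherit the criterion: $P$ is a PP of $\fq$ if and only if $(r_i,s)=1$ for all $i$, the set $\{(r_i+\tfrac{e_ik'_it_is}{2})i \mid i=0,\ldots,\ell-1\}$ is a complete set of residues modulo $\ell$, and $f_i(\zeta^{ie_i})\neq 0$ for all $i$. Substituting $k'_i = k_i$ reproduces the exponent condition displayed in the corollary.

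The only step that requires a small computation is the translation of the nonvanishing conditions $h_{k_i}(\zeta^{ie_i})\neq 0$ into the arithmetic statements in the corollary. For $i=0$, I use $h_{k_0}(\zeta^0) = h_{k_0}(1) = k_0+1$, which is nonzero in $\fq$ exactly when $p\nmid k_0+1$. For $1 \le i \le \ell-1$, I use the geometric sum formula
\[
h_{k_i}(\zeta^{ie_i}) = \frac{\zeta^{(k_i+1)ie_i}-1}{\zeta^{ie_i}-1}.
\]
Since $(\ell,e_i)=1$ and $1\le i\le \ell-1$, the denominator is nonzero; the numerator vanishes precisely when $\ell \mid (k_i+1)ie_i$, and by the coprimality of $e_i$ and $\ell$ this is equivalent to $\ell \mid i(k_i+1)$. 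Negating yields the condition $\ell \nmid i(k_i+1)$ stated in the corollary.

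I do not anticipate a genuine obstacle: since Theorem~\ref{allOneBranchesCongruenceMinusOne} has already absorbed the delicate identity $h_{k'_i}(\zeta^{ie_i})^s = \zeta^{e_ik'_it_is/2}$ (which relied on $q_0\equiv -1\pmod{\ell}$ and the fact that $q_0-1 \mid s$), the corollary is essentially a bookkeeping consequence. The only mildly nontrivial point worth verifying is that the specialization $\hat{h}_i\equiv 1$ genuinely lies in $\mathbb{F}_{q_0}[x]$ and contributes the value $1$ to $f_i(\zeta^{ie_i})^{(q-1)/\ell}$, so that no extra root-of-unity factor appears beyond $\zeta^{e_ik_it_is i/2}$. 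Once this is checked, both directions of the equivalence follow immediately from the corresponding equivalence in Theorem~\ref{allOneBranchesCongruenceMinusOne}.
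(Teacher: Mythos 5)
Your proposal is correct and matches the paper's own derivation: the paper likewise obtains this corollary by specializing Theorem~\ref{allOneBranchesCongruenceMinusOne} to the case $f_i(x)=h_{k_i}(x)^{t_i}$ (i.e.\ $k'_i=k_i$ and trivial $\hat{h}_i$) and then invoking exactly the two facts you state, namely $h_{k_0}(1)=k_0+1\neq 0$ iff $p\nmid k_0+1$ and $h_{k_i}(\zeta^{ie_i})=\frac{\zeta^{(k_i+1)ie_i}-1}{\zeta^{ie_i}-1}\neq 0$ iff $\ell\nmid i(k_i+1)$, using $(\ell,e_i)=1$. The paper leaves this translation implicit, so your write-up is just a more explicit version of the same argument.
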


Finally we take all branches as binomials and obtain a large class of PPs, which generalizes Theorem 3.1 \cite{AW:06} and Theorem~2.5 \cite{Zieve-2}. We note the necessary and sufficient description of a subclass of permutation binomials can be found in \cite{Wang, Wang2}.

\begin{theorem}\label{binomialBranches}
Let $\ell$ be positive integer with $ q- 1 = \ell s$. 
Let $u_i > r_i >0$ and $a_i \in \fq^*$ such that  $\gcd(u_i-r_i, q-1) := s$ is a constant for all $i=0, \ldots, \ell-1$. Let  $e_i := (u_i-r_i)/\ell$ and  $\eta$ be a fixed primitive $2\ell$-th root of unity in the algebraic closure of
$\fq$ and $\zeta=\eta^2$.   Suppose $(\eta^{i e_i} + a_i/\eta^{i e_i} )^s =1$ for each $i=0, \ldots, \ell-1$.  Then 
\[
P(x)  = 
\left\{
\begin{array}{ll} 
0, &   if ~ x=0; \\
x^{u_0} + a_0 x^{r_0}, &  if~ x \in C_0; \\
x^{u_1} + a_1 x^{r_1},  &  if~ x \in C_1; \\
\vdots & \vdots \\
x^{u_{\ell-1}} + a_{\ell-1} x^{r_{\ell-1}}, & if ~ x \in C_{\ell}.
\end{array}
\right.
\]
permutes $\fq$ if and only if $-a_i \neq \zeta^{i e_i}$ and $(r_i, s)=1$ for all $0\leq i \leq \ell-1$, $\{r_i i +  \frac{e_i s i}{2}  \mid i =0, \ldots, \ell -1\}$  is a complete set of residues modulo $\ell$.
\end{theorem}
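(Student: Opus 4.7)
The plan is to reduce the theorem to Theorem~\ref{main2} by rewriting each binomial branch in the canonical form $x^{r_i} f_i(x^s)$. Since $s=\gcd(u_i-r_i,q-1)$ divides $u_i-r_i$, on the coset $C_i$ (where $x^s=\zeta^i$) we have
\[
x^{u_i}+a_ix^{r_i} \;=\; x^{r_i}\bigl((x^s)^{(u_i-r_i)/s}+a_i\bigr),
\]
so taking $f_i(y) = y^{e_i}+a_i$ (with $e_i$ read off as in the statement) gives $f_i(\zeta^i)=\zeta^{ie_i}+a_i$. Theorem~\ref{main2} then says $P$ permutes $\fq$ iff $(r_i,s)=1$ for every $i$ and $\{\zeta^{r_i i}f_i(\zeta^i)^s : i=0,\ldots,\ell-1\}=\mu_\ell$, the nonvanishing requirement $f_i(\zeta^i)\neq0$ corresponding exactly to the condition $-a_i\neq\zeta^{ie_i}$.

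The next step is to evaluate $f_i(\zeta^i)^s$ using the auxiliary primitive $2\ell$-th root $\eta$. Because $\eta^2=\zeta$, one can factor
\[
\zeta^{ie_i}+a_i \;=\; \eta^{2ie_i}+a_i \;=\; \eta^{ie_i}\!\left(\eta^{ie_i}+\tfrac{a_i}{\eta^{ie_i}}\right).
\]
Raising to the $s$-th power and invoking the standing hypothesis $(\eta^{ie_i}+a_i/\eta^{ie_i})^s=1$ yields
\[
f_i(\zeta^i)^s \;=\; \eta^{ie_is}.
\]
Since $f_i(\zeta^i)^s$ lives in $\fq$ and is therefore an $\ell$-th root of unity, the integer $ie_is$ must be even, and we may rewrite $\eta^{ie_is}=\zeta^{ie_is/2}$.

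Substituting into the criterion from Theorem~\ref{main2}, the permutation condition becomes
\[
\{\zeta^{r_ii+e_isi/2} : i=0,\ldots,\ell-1\} \;=\; \mu_\ell,
\]
which is precisely the assertion that $\{r_ii+e_isi/2 \pmod{\ell} : i=0,\ldots,\ell-1\}$ is a complete set of residues modulo $\ell$. Combined with $(r_i,s)=1$ for all $i$ and the nonvanishing condition $-a_i\neq\zeta^{ie_i}$, this gives exactly the stated equivalence.

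The main obstacle I anticipate is the careful bookkeeping of exponents in the passage from $\eta$-powers to $\zeta$-powers: one must verify that the parity of $ie_is$ is automatically forced by the ambient constraints so that $\eta^{ie_is}$ collapses cleanly to $\zeta^{ie_is/2}$, and one must simultaneously track the nonvanishing of $f_i(\zeta^i)$ through the factorization to recover the transparent condition $-a_i\neq\zeta^{ie_i}$. Once these arithmetic details are in place, the theorem follows immediately from Theorem~\ref{main2}.
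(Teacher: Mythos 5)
Your proof is correct and follows the paper's argument essentially verbatim: rewrite each branch as $x^{r_i}\bigl(x^{e_is}+a_i\bigr)$, factor $\zeta^{ie_i}+a_i=\eta^{ie_i}\bigl(\eta^{ie_i}+a_i/\eta^{ie_i}\bigr)$, invoke the hypothesis to get $f_i(\zeta^i)^s=\eta^{ie_is}=\zeta^{ie_is/2}$, and feed this into the cyclotomic criterion (the paper routes the last step through Theorem~\ref{specialMain} rather than Theorem~\ref{main2}, but those are the same criterion). Your two ``bookkeeping'' worries resolve exactly as you suspect and are in fact slight improvements on the paper's write-up: the parity of $ie_is$ is forced because $f_i(\zeta^i)^s\in\fq^*$ is an $\ell$-th root of unity, and $e_i$ must indeed be read as $(u_i-r_i)/s$ (the statement's ``$(u_i-r_i)/\ell$'' is a typo that the paper's own proof silently corrects).
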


\begin{proof}
Let $x^{u_i} + a_i x^{r_i} = x^{r_i} (x^{e_i s} + a)$. We have
\begin{eqnarray*}
(\zeta^{i e_i} + a)^{s}&=& (\eta^{2ie_i} + a)^s\\
&=& \eta^{ie_i s} \left( \eta^{ie_i} + a /\eta^{ie_i} \right)^s \\
&=& \eta^{ie_is} = \zeta^{ie_is/2}.
\end{eqnarray*}
The rest of proof follows easily from Theorem~\ref{specialMain}.
\end{proof}

\section{conclusion}

In this paper we study permutation polynomials of finite fields in terms of cyclotomy.  We provide both theoretical and algorithmic ways to generate permutation polynomials of finite fields. We have demonstrated how to construct concrete classes of PPs using our method. One can expect to generate more concrete classes of permutation polynomials by taking different polynomials as branches in our cyclotomic mapping construction. It is also  expected to further extend our method to additive cyclotomy as studied in \cite{AGW:11, YuanDing:12, Zieve}.

\bigskip

\medskip\par

\noindent School of Mathematics and Statistics, Carleton University, 1125 Colonel By Drive, Ottawa, Ontario, K1S 5B6, CANADA\\
E--mail address: {wang@math.carleton.ca}

\end{document}